\theoremstyle{plain}
\newtheorem{theorem}{Theorem}[section]
\newtheorem{prop}[theorem]{Proposition}
\newtheorem{lemma}{Lemma}[section]
\theoremstyle{definition}
\newtheorem{remark}{Remark}[section]
\begin{document}

\newcommand{\acr}{\newline\indent}
\title[Symmetry of Birkhoff-James orthogonality of operators ]{Symmetry of Birkhoff-James orthogonality of operators defined between infinite dimensional Banach spaces}
\author[  Kallol Paul, Arpita Mal and Pawel W\'ojcik]{ Kallol Paul, Arpita Mal and Pawel W\'ojcik}

\address[Paul]{Department of Mathematics\\ Jadavpur University\\ Kolkata 700032\\ West Bengal\\ INDIA}
\email{kalloldada@gmail.com}

\address[Mal]{Department of Mathematics\\ Jadavpur University\\ Kolkata 700032\\ West Bengal\\ INDIA}
\email{arpitamalju@gmail.com}

\address[W\'ojcik]{Institute of Mathematics\\ Pedagogical University of Cracow \\
Podchor\c a\.zych 2, 30-084 Krak\'ow\\ Poland }
\email{ pawel.wojcik@up.krakow.pl}

\thanks{First author acknowledges the generosity of Pedagogical University of Krak\'ow, Poland and in particular, Professor Jacek Chmieli\'nski,   for supporting the visit to the university during April 2018. This research paper originated from that visit. Second author would like to thank UGC, Govt. of India for the financial support.
}

\subjclass[2010]{Primary 47L05, Secondary 46B20}
\keywords{Birkhoff-JamesOrthogonality; symmetric operators; infinite dimensional Banach space}

\begin{abstract}
We study left symmetric bounded linear operators in the sense of Birkhoff-James orthogonality defined between infinite dimensional Banach spaces. We prove that a bounded linear operator defined between two strictly convex Banach spaces is left symmetric if and only if it is zero operator when the domain space is reflexive and Kadets-Klee. We exhibit a non-zero left symmetric operator when the spaces are not strictly convex.
We also study right symmetric bounded linear operators between infinite dimensional Banach spaces.

\end{abstract}

\maketitle

\section{Introduction}
The study of left symmetric and right symmetric operators in the sense of Birkhoff-James orthogonality is an interesting area of research in the space of bounded linear operators between Banach spaces.  Turn\v sek \cite{Ta} studied such operators when the underlying space is a Hilbert space. The characterization of  left symmetric and right symmetric operators between Banach spaces is still an open problem. Recently, Sain et. al. \cite{SGP} studied those operators between finite dimensional Banach spaces. This paper is based on the study of left symmetric and right symmetric operators between infinite dimensional Banach spaces. Before proceeding further, we fix the notations and terminologies.

\smallskip

Let $\mathbb{X},\mathbb{Y}$ denote real normed linear spaces with  dim $\mathbb{X} > 1$ and  dim $\mathbb{Y} > 1$, unless otherwise mentioned. Let $B_{\mathbb{X}}=\{x\in \mathbb{X}:\|x\|\leq 1\}$ and $S_{\mathbb{X}}=\{x\in \mathbb{X}:\|x\|= 1\}$ denote the unit ball and the unit sphere of $\mathbb{X}$ respectively. Let $B(\mathbb{X},\mathbb{Y})~(K(\mathbb{X},\mathbb{Y}))$ denote the space of all bounded (compact) linear operators between $\mathbb{X}$ and $\mathbb{Y}.$ For any two elements $x,y\in \mathbb{X},$  $x$ is said to be Birkhoff-James orthogonal \cite{B,J} to $y,$ written as $x\perp_B y,$ if $\|x+\lambda y\|\geq \|x\|$ for all scalar $\lambda.$ It is easy to see that Birkhoff-James orthogonality notion is, in general, not symmetric. James \cite{Ja} proved that if dim $\mathbb{X}\geq 3$ and Birkhoff-James orthogonality is symmetric then the norm is induced by an inner product. An element $x\in \mathbb{X}$ is said to be left symmetric (right symmetric)  if for any element $ y \in \mathbb{X},~ x\perp_B y  \Rightarrow y\perp_B x$ ($y\perp_B x \Rightarrow x\perp_B y$ ).  A normed linear space $\mathbb{X}$ is said to be strictly convex if the unit sphere does not contain any straight line segment, i.e., for any  $x,y\in S_{\mathbb{X}},~\|(1-t)x+ty\|=1$ for some $ t \in (0,1) $  implies $ x=y.$   An element $x\in S_{\mathbb{X}}$ is said to be a smooth point if $x$ has a unique norming linear functional, i.e., there exists a unique $f\in S_{\mathbb{X}^*}$ such that $f(x)=1.$ A normed linear space $\mathbb{X}$ is said to be smooth if every element of $S_{\mathbb{X}}$ is smooth.
Sain \cite{S} introduced the notion of $x^{+}, x^{-}$ in studying Birkhoff-James orthogonality which are defined as follows: For any two elements $ x, y\in \mathbb{X}, $  $ y \in x^{+} $ if $ \| x + \lambda y \| \geq \| x \| $ for all $ \lambda \geq 0. $ Similarly, we say that $ y \in x^{-} $ if $ \| x + \lambda y \| \geq \| x \| $ for all $ \lambda \leq 0. $ The notion was further generalized in \cite{SPM} as follows: For $x,y\in \mathbb{X}$ and $\epsilon \in [0,1),$   $y\in x^{+\epsilon}$ if $\|x+\lambda y\|\geq \sqrt{1-\epsilon^2}\|x\|$ for all $\lambda \geq 0.$ Similarly, we say that $y\in x^{-\epsilon}$ if $\|x+\lambda y\|\geq \sqrt{1-\epsilon^2}\|x\|$ for all $\lambda \leq 0.$  The norm attaining set $M_T$ of $T$ plays an important role in our study which is defined as : For $T\in B(\mathbb{X},\mathbb{Y}),$  $ M_T=\{x\in S_{\mathbb{X}}:\|Tx\|=\|T\|\},$ i.e., it is the set of all unit vectors at which $T$ attains its norm. A sequence $\{x_n\}$ of unit vectors is said to be a norming sequence for $T$ if $ \|Tx_n\| \rightarrow \|T\|.$

\smallskip

It is easy to observe that  $B(\mathbb{X},\mathbb{Y})$ is neither strictly convex nor smooth and so it is not an inner product space. Thus, it becomes interesting to find out the elements in $ B(\mathbb{X},\mathbb{Y}) $ which are left symmetric and right symmetric. Turn\v sek \cite{Ta} proved that $T\in B(\mathbb{H},\mathbb{H})$ is left symmetric if and only if $T=0.$ Sain et. al. \cite{SGP} studied left symmetric operators in $B(\mathbb{X},\mathbb{X})$ when dim $\mathbb{X}$ is finite, $\mathbb{X}$ is strictly convex and smooth. In second section, we characterize left symmetric operators in $B(\mathbb{X},\mathbb{Y})$, with  $\mathbb{X},\mathbb{Y}$  not necessarily finite dimensional and also not smooth.   We prove that $T\in B(\mathbb{X},\mathbb{Y})$ with $M_T\neq \emptyset$ is left symmetric if and only if $T=0$, where, $\mathbb{X}$ is reflexive, Kadets-Klee and strictly convex Banach space and $Y$ is any strictly convex Banach space. We also prove that $T\in K(\mathbb{X},\mathbb{Y})$ is left symmetric if and only if $T=0,$ where $\mathbb{X}$ is reflexive, strictly convex Banach space and $\mathbb{Y}$ is any strictly convex Banach space. Next, we seek for non-zero left symmetric operators and obtain some positive results in this direction.  We exhibit non-zero left symmetric operators defined between $\mathbb{X}\oplus_1 \mathbb{R}$ and $\mathbb{Y},$ where $\mathbb{X}$ is a reflexive Banach space, $\mathbb{Y}$ is reflexive smooth Banach space. Note that if $\mathbb{X},~\mathbb{Z}$ are normed linear spaces, then $\mathbb{X}\oplus_1 \mathbb{Z}$ denote the space $\mathbb{X}\times \mathbb{Z}$ with $\|(x,z)\|_1:=\|x\|+\|z\|.$ We further characterize left symmetric compact operators from $\ell_1^n$ to a reflexive smooth Banach space.

In third section, we study right symmetric operators. Turn\v sek \cite{Ta} and Ghosh et. al. \cite{GSP} studied independently right symmetric operators between Hilbert spaces. In the case of  $\mathbb{X}$ being a Banach space, not necessarily a Hilbert space, the study of right symmetric operators in $B(\mathbb{X},\mathbb{X})$ becomes  more involved. In \cite{SGP} Sain et. al. studied right symmetric operators in  $B(\mathbb{X},\mathbb{X}),$ where $\mathbb{X}$ is finite dimensional. Here we study right symmetric operators between infinite dimensional Banach spaces which substantially improves on
results of \cite{SGP}.

\section{Left symmetric operators}

We begin this section with an easy proposition which explores the connection between symmetricity, smoothness and strict convexity in a normed linear space.
\begin{prop}\label{prop-symm}
Let $ \mathbb{X} $ be a normed linear space. \\
(i) If $ x \in S_\mathbb{X}$ is right symmetric and smooth then $x$ is left symmetric. \\
(ii) If $ \mathbb{X} $ is strictly convex and $ x \in S_\mathbb{X}$ is left symmetric  then $x$ is right symmetric. \\
(iii) If $ \mathbb{X} $ is strictly convex and $ x \in S_\mathbb{X}$ is left symmetric  then $x$ is smooth.
\end{prop}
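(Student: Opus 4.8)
The plan is to reduce all three parts to two standard facts about Birkhoff--James orthogonality, supplemented in (ii)--(iii) by one consequence of strict convexity. The first fact is James's characterization: for $x,y\in\mathbb{X}$ we have $x\perp_B y$ if and only if there is $f\in S_{\mathbb{X}^*}$ with $f(x)=\|x\|$ and $f(y)=0$; in particular the kernel of every norming functional of $x$ is contained in $x^{\perp}:=\{y\in\mathbb{X}:x\perp_B y\}$, and if $x$ is smooth with unique norming functional $f$ then $x^{\perp}=\ker f$. The second fact is that for $x\neq0$ the function $t\mapsto\|z-tx\|$ is convex and tends to $+\infty$ as $|t|\to\infty$, hence attains its minimum at some $t_0$, and for every such $t_0$ one has $(z-t_0x)\perp_B x$; in particular ${}^{\perp}x:=\{y\in\mathbb{X}:y\perp_B x\}$ satisfies $\mathbb{X}={}^{\perp}x+\mathbb{R}x$. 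With this notation, $x$ is left symmetric iff $x^{\perp}\subseteq{}^{\perp}x$, and right symmetric iff ${}^{\perp}x\subseteq x^{\perp}$.

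For (i), I would take $f$ to be the unique norming functional of the smooth point $x$, so that $x^{\perp}=\ker f$, and right symmetry gives ${}^{\perp}x\subseteq x^{\perp}=\ker f$. Given $z\in\ker f$, pick $t_0$ with $(z-t_0x)\perp_B x$; then $z-t_0x\in{}^{\perp}x\subseteq\ker f$, so $f(z-t_0x)=0$, and since $z\in\ker f$ and $f(x)=1$ this forces $t_0=0$, whence $z=z-t_0x\perp_B x$. Thus $x^{\perp}=\ker f\subseteq{}^{\perp}x$, i.e.\ $x$ is left symmetric. (Strict convexity plays no role here.)

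For (ii) and (iii) assume $\mathbb{X}$ is strictly convex; this upgrades the minimizer above to a unique $t_0=:\lambda(z)$, the unique scalar with $(z-\lambda(z)x)\perp_B x$: were $t_1<t_2$ both minimizers of $t\mapsto\|z-tx\|$ with value $c$, convexity would make this function $\equiv c$ on $[t_1,t_2]$, so if $c>0$ the unit vectors $(z-t_1x)/c$ and $(z-t_2x)/c$ would have midpoint $\bigl(z-\tfrac{t_1+t_2}{2}x\bigr)/c$ again a unit vector, forcing them equal by strict convexity and $t_1=t_2$, while $c=0$ forces $z=t_1x=t_2x$, again $t_1=t_2$. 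Note $\lambda\equiv0$ on ${}^{\perp}x$, since $z-0\cdot x\perp_B x$ for $z\in{}^{\perp}x$. The key observation is: if $f$ is a norming functional of $x$ with $\ker f\subseteq{}^{\perp}x$, then for each $z\in\mathbb{X}$ the vector $z-f(z)x$ lies in $\ker f\subseteq{}^{\perp}x$, hence $(z-f(z)x)\perp_B x$, and uniqueness yields $\lambda(z)=f(z)$. If now $x$ is left symmetric, then $\ker f\subseteq x^{\perp}\subseteq{}^{\perp}x$ for every norming functional $f$ of $x$, so $f(z)=\lambda(z)$ for all $z$; since the right-hand side does not depend on $f$, all norming functionals of $x$ coincide, which proves (iii). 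Finally, fixing a norming functional $f$, for $z\in{}^{\perp}x$ we get $f(z)=\lambda(z)=0$, hence $z\in\ker f\subseteq x^{\perp}$; thus ${}^{\perp}x\subseteq x^{\perp}$, i.e.\ $x$ is right symmetric, which proves (ii).

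I expect the only genuinely non-formal ingredient to be the uniqueness of the Birkhoff ``foot of the perpendicular'' $\lambda(z)$ under strict convexity; everything else is bookkeeping with the splitting $\mathbb{X}=\ker f\oplus\mathbb{R}x$ supplied by a norming functional. The step one must get right is the passage from the one-sided inclusion $x^{\perp}\subseteq{}^{\perp}x$ that defines left symmetry to control of all of ${}^{\perp}x$: left symmetry forces $\ker f\subseteq{}^{\perp}x$ for \emph{every} norming $f$ of $x$, and strict convexity then rigidifies these inclusions by pinning $\lambda(z)$ to $f(z)$ for all such $f$ at once. Since ${}^{\perp}x$ need not be a linear subspace a priori, it is precisely strict convexity --- via uniqueness of the nearest point to $z$ on the line $\mathbb{R}x$ --- that makes this argument go through.
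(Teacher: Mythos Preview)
Your proof is correct. The ingredients are the same as in the paper --- James's characterization $x\perp_B y \Leftrightarrow f(y)=0$ for some norming $f$ of $x$, the existence of a scalar $t$ making $(z-tx)\perp_B x$, and the uniqueness of that scalar from smoothness (right uniqueness) or strict convexity (left uniqueness). The paper, however, argues each of (i)--(iii) separately and element-wise: for (ii), it starts from $y\perp_B x$, picks $b$ with $x\perp_B(bx+y)$, flips via left symmetry to $(bx+y)\perp_B x$, and invokes left uniqueness to force $b=0$; (iii) is an analogous contradiction argument. Your treatment of (ii) and (iii) is a mild but pleasant unification: you introduce the function $\lambda(z)$ (the unique foot of the perpendicular from $z$ to $\mathbb{R}x$, well defined by strict convexity) and observe that left symmetry forces $f=\lambda$ for \emph{every} norming functional $f$ of $x$, from which smoothness (iii) and the inclusion ${}^{\perp}x\subseteq\ker f\subseteq x^{\perp}$ (ii) both drop out at once. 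The paper's version is a touch shorter and more direct for each individual part; yours makes the common mechanism behind (ii) and (iii) explicit.
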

\begin{proof} (i) Let $ x$ be right symmetric and smooth. Let $ x \bot_B y.$ Then by \cite[Th. 2.3]{J}, there exists $a \in \mathbb{R} $ such that $ (ax+y) \bot_B x.$ Since $x$ is right symmetric so $ x \bot_B (ax+y).$ Again $x$ being smooth, by \cite[Th. 4.1]{J}, there exists unique $a\in \mathbb{R}$ such that $ x \bot_B (ax +y).$ So we must have $a=0$ and hence $ y \bot_B x.$ Thus $x $ is left symmetric. \\
(ii)  Let $ y \bot_B x.$ Then by \cite[Cor. 2.2]{J}, there exists $b\in \mathbb{R}$ such that $x \bot_B (bx +y).$ Since $x$ is left symmetric so $ (bx+y) \bot_B x.$ Being strictly convex, by \cite[Th. 4.3]{J}, there exists unique scalar $b$ such that $ (bx+y) \bot_B x.$ So we must have $b=0$ and hence $ x \bot_B y.$ Thus, $x$ is right symmetric. \\
(iii)  If possible, let $x$ be not smooth. Then Birkhoff-James orthogonality at $x$ is not right unique. Therefore, there exists $ y \in  \mathbb{X}$ and distinct scalars $a$ and $b$ such that $ x \bot_B (ax+y ) $ and $ x \bot_B (bx+y).$ Since $x$ is left symmetric, so $ (ax+y) \bot_B x$ and $ (bx+y) \bot_B x.$ Thus, Birkhoff-James orthogonality is not left unique. This contradicts the fact that $\mathbb{X}$ is strictly convex. Hence, $ x$ is smooth.
\end{proof}

One of the main tool that is being used to study left symmetric operators is the connection between orthogonality in the space of linear operators and that in the ground space. Paul et. al. \cite{PSG} and Sain et. al. \cite{SPM} explored this connection for  compact linear operators. In the same line of thinking we prove the following theorem.
\begin{theorem}\label{th-Kadets-Klee}
	(i) Let $\mathbb{X}$ be a reflexive Kadets-Klee Banach space, $\mathbb{Y}$ be  any Banach space and $ T \in K(\mathbb{X}, \mathbb{Y})$.  Then for any $ A \in B(\mathbb{X}, \mathbb{Y})$, $ T \bot_B A$ if and only if there exists $x,y \in M_T$ such that $ Ax \in (Tx)^+$ and $ Ay \in (Ty)^-.$ \\
	(ii) In addition, if $M_T = D \cup (-D), $ where $D$ is a compact, connected subset of $ S_\mathbb{X}$, then there exists $x \in M_T$ such that $Tx \bot_B Ax.$
\end{theorem}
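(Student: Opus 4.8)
The plan is to prove both directions of (i) separately, and then deduce (ii) from part (i) via a connectedness argument. For the "if" direction of (i): suppose there exist $x,y\in M_T$ with $Ax\in (Tx)^+$ and $Ay\in(Ty)^-$. I would show $T\perp_B A$ directly by estimating $\|T+\lambda A\|$ for every scalar $\lambda$. If $\lambda\ge 0$, then using $x\in M_T$ and $Ax\in(Tx)^+$ one gets $\|T+\lambda A\|\ge\|(T+\lambda A)x\|=\|Tx+\lambda Ax\|\ge\|Tx\|=\|T\|$. If $\lambda\le 0$, one argues the same way with $y$ in place of $x$, using $Ay\in(Ty)^-$. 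Hence $\|T+\lambda A\|\ge\|T\|$ for all $\lambda$, i.e. $T\perp_B A$. This direction uses neither reflexivity, Kadets-Klee, nor compactness.

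For the "only if" direction, assume $T\perp_B A$. The idea is to produce, for each $\lambda$, a near-optimal vector and then extract limit points using compactness of $T$ together with reflexivity and the Kadets-Klee property. Concretely, pick a sequence $\lambda_n\downarrow 0$ with $\lambda_n>0$; since $\|T+\lambda_n A\|\ge\|T\|$, choose unit vectors $u_n$ with $\|(T+\lambda_n A)u_n\|\to \|T+\lambda_n A\|$ — actually it is cleaner to choose $u_n$ with $\|(T+\lambda_n A)u_n\| \ge \|T\| - \tfrac1n$ and $\|u_n\|=1$. By reflexivity, after passing to a subsequence $u_n\rightharpoonup x$ weakly with $\|x\|\le 1$; since $T$ is compact, $Tu_n\to Tx$ in norm, and combining $\|Tu_n + \lambda_n Au_n\|\ge\|T\|-\tfrac1n$ with $\lambda_n\to 0$ and $\|Au_n\|\le\|A\|$ gives $\|Tx\|\ge\|T\|$, so $\|x\|=1$ and $x\in M_T$. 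The Kadets-Klee property then upgrades $u_n\rightharpoonup x$, $\|u_n\|\to\|x\|$ to $u_n\to x$ in norm, whence $Au_n\to Ax$. Now for any fixed $\mu\ge 0$ and large $n$, writing $\lambda_n$-combinations and passing to the limit yields $\|Tx+\mu Ax\|\ge\|Tx\|$, i.e. $Ax\in(Tx)^+$. Running the same argument with $\lambda_n\uparrow 0$ produces $y\in M_T$ with $Ay\in(Ty)^-$.

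For part (ii), assume additionally $M_T=D\cup(-D)$ with $D\subseteq S_{\mathbb X}$ compact and connected. By part (i) there are $x,y\in M_T$ with $Ax\in(Tx)^+$ and $Ay\in(Ty)^-$; replacing $x,y$ by their negatives if necessary (note $z\in M_T\iff -z\in M_T$, and $(T(-z))^{\pm}=(Tz)^{\mp}$ up to the obvious relabeling, so we may assume $x,y\in D$). Define for $z\in D$ the quantity $\phi(z)=\inf_{\lambda\ge 0}\|Tz+\lambda Az\| - \inf_{\lambda\le 0}\|Tz+\lambda Az\|$, or more robustly a continuous function whose sign detects whether $Az\in(Tz)^+$ versus $Az\in(Tz)^-$; one checks $\phi$ is continuous on $D$ (using compactness of $T$ and continuity of the norm), $\phi(x)\ge 0$ (since $Ax\in (Tx)^+$ forces the first infimum to equal $\|Tx\|$) and $\phi(y)\le 0$. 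By connectedness of $D$ and the intermediate value theorem there is $z_0\in D$ with $\phi(z_0)=0$, which forces $Az_0\in(Tz_0)^+\cap(Tz_0)^-$, i.e. $\|Tz_0+\lambda Az_0\|\ge\|Tz_0\|$ for all $\lambda$, that is $Tz_0\perp_B Az_0$.

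The main obstacle I anticipate is the "only if" direction of (i): one must be careful that the limit vector $x$ actually lies in $M_T$ (this is where reflexivity supplies weak compactness and compactness of $T$ converts weak convergence of $u_n$ into norm convergence of $Tu_n$) and that the orthogonality relation survives the limit. The Kadets-Klee hypothesis is exactly what is needed to pass from $Au_n$ to $Ax$, since $A$ is only assumed bounded, not compact. A secondary subtlety in (ii) is writing down a genuinely continuous "defect" function on $D$ whose sign distinguishes the two one-sided conditions; here compactness of $D$ guarantees the infima over $\lambda$ in a bounded range are attained and depend continuously on $z$.
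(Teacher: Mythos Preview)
Your ``if'' direction in (i) matches the paper's, and your handling of (ii) via a continuous sign function on $D$ is essentially a direct proof of what the paper simply cites as \cite[Lemma 2.1]{PSG}; that part is fine (the continuity of your $\phi$ does need some care, but nothing deep --- convexity of $\lambda\mapsto\|Tz+\lambda Az\|$ together with $\|Tz\|=\|T\|>0$ on $D$ controls the infima uniformly).

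The genuine gap is in the ``only if'' direction of (i). You extract $x\in M_T$ from near-optimizers $u_n$ of $T+\lambda_n A$ with $\lambda_n\downarrow 0$ and $\|(T+\lambda_n A)u_n\|\ge\|T\|-\tfrac1n$; the passage to $x\in M_T$ via reflexivity, compactness of $T$, and Kadets--Klee is correct. But the sentence ``writing $\lambda_n$-combinations and passing to the limit yields $\|Tx+\mu Ax\|\ge\|Tx\|$'' does not follow as stated. The only information you have is at the single scale $\lambda_n$, and $\lambda_n\to 0$, so in the limit you recover only $\|Tx\|\ge\|T\|$. To reach a fixed $\mu>0$ you must extrapolate by convexity: for $\mu>\lambda_n$,
\[
\|Tu_n+\mu Au_n\|\;\ge\;\|Tu_n\|+\frac{\mu}{\lambda_n}\bigl(\|Tu_n+\lambda_n Au_n\|-\|Tu_n\|\bigr),
\]
and this lower bound is only useful if your approximation error $\epsilon_n=\tfrac1n$ satisfies $\epsilon_n/\lambda_n\to 0$. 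With $\lambda_n$ chosen arbitrarily (say $\lambda_n=n^{-2}$) the bound blows up and the argument fails. The fix is to couple the rates, e.g.\ take $\lambda_n=n^{-1}$ and choose $u_n$ with $\|(T+\lambda_n A)u_n\|\ge\|T\|-n^{-2}$; then the convexity extrapolation goes through and you get $Ax\in(Tx)^+$ in the limit.

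The paper avoids this delicacy entirely: it invokes \cite[Th.~2.4]{SPM}, which already characterizes $T\perp_B A$ by the existence of norming sequences $\{x_n\},\{y_n\}$ for $T$ with $Ax_n\in(Tx_n)^{+(\epsilon_n)}$ and $Ay_n\in(Ty_n)^{-(\delta_n)}$ for some $\epsilon_n,\delta_n\to 0$. One then extracts norm-convergent subsequences exactly as you do (reflexivity, compactness, Kadets--Klee) and passes the approximate one-sided conditions to the limit directly --- no extrapolation in $\lambda$ is needed. Your route is more self-contained but demands the rate-matching step that is currently missing.
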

\begin{proof}
	$(i)$ The sufficient part follows trivially. We only prove the necessary part. Let $T \bot_B A.$ We first claim that for any norming sequence $\{x_n\}$  for $T$ there exists $ x \in M_T$ such that $ x_{n_k} \rightarrow x$ for some subsequence $\{x_{n_k}\}.$ Since $\mathbb{X}$ is reflexive, $B_{\mathbb{X}}$ is weakly compact. Therefore, there exists a  subsequence $\{x_{n_k}\}$ which is weakly convergent to some element $x \in B_{\mathbb{X}}.$ Since $T$ is compact, so $ Tx_{n_k} \rightarrow Tx.$ Again, $\{x_n \}$ is a norming sequence for $T.$ Thus, $\|Tx\| = \|T\| $ and hence $ x \in M_T.$ Now, $ \|x_{n_k}\| = \|x\| = 1. $ Since $x_{n_k}\rightharpoonup x,~\|x_{n_k}\|\to \|x\|$ and $ \mathbb{X}$ is Kadets-Klee, so $ x_{n_k} \rightarrow x.$ This justifies our claim. Since $T \bot_B A,$ so by \cite[Th. 2.4]{SPM} either $(a)$ or $(b)$ holds: \\
	$(a)$ There exists a  norming sequence $\{x_n\}$ for $T$ such that  $\|Ax_n\| \rightarrow 0$ as $n\rightarrow \infty$. \\
	$(b)$ There exists two norming sequences $\{x_n\},~\{y_n\}$ for $T$  and two sequences of positive real numbers $\{\epsilon_n\}$ , $\{\delta_n\}$ such that $  \epsilon_n \rightarrow 0$, $\delta_n\rightarrow 0$ as $n\rightarrow \infty$ and
	$ Ax_n\in(Tx_n)^{+(\epsilon_n)}$ and $Ay_n\in(Ty_n)^{-(\delta_n)}$ for all  $n\in \mathbb{N}$. \\
	If $(a)$ holds, then by the above claim we get $ x \in M_T$ such that $Ax = 0$ and so $Tx \bot_B Ax.$  If $(b)$ holds, then  $ \| Tx_n + \lambda Ax_n \| \geq \sqrt{1 - \epsilon^2} \|Tx_n\| $ for all $\lambda \geq 0$ and for all $n \in \mathbb{N}.$ Again following our claim we can find $ x \in M_T$ such that  $ \| Tx + \lambda Ax \| \geq \|Tx\|$ for all $\lambda \geq 0$, i.e., $Ax \in (Tx)^+.$ Similarly, we get $ y \in M_T$ such that $ Ay \in (Ty)^-.$
	This completes the proof.
	
	\smallskip
	$(ii)$ Suppose there does not exist any $x\in M_T $ such that $Tx \bot_B Ax.$ Then from \cite[Lemma 2.1]{PSG}, there exists $ \lambda_0 \neq 0 $ such that $ \| Tx + \lambda_0Ax\| < \|Tx\|~ \forall~ x \in M_T.$ Let $\lambda_0>0.$Then $Ax \notin (Tx)^+$ for all $x\in M_T.$ This contradicts $(i).$ Similarly, $\lambda_0<0$ implies that $Ax \notin (Tx)^-$ for all $x\in M_T.$ This again contradicts $(i).$ Therefore, there exists $x\in M_T$ such that $Tx\perp_B Ax.$ This completes the proof of the theorem.
\end{proof}

Using \cite[Th. 2.1]{PSG} we now prove the following theorem which improves on \cite[Th. 2.5]{S}.

\begin{theorem}\label{th-1111}
	Let $\mathbb{X},\mathbb{Y}$ be normed linear spaces.
	Suppose that
	$\mathbb{X}$ is strictly convex and reflexive.
	Suppose that $T\in K(\mathbb{X},\mathbb{Y})$ is left symmetric with $\|T\|=1$. If
	$x_1\in M_T$, $y\in S_\mathbb{X}$, $y\perp_B x_1$, then $Ty=0$.
\end{theorem}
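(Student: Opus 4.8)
The plan is to use the left symmetry of $T$ together with the machinery of Theorem \ref{th-Kadets-Klee}(i) to force $Ty = 0$. Since $x_1 \in M_T$ and $y \perp_B x_1$, I would like to manufacture an auxiliary operator $A$ that is Birkhoff--James orthogonal to $T$ and whose action along the norm-attaining directions of $T$ detects $Ty$. Concretely, consider the rank-one (hence compact) operator $A$ defined by $Ax = f(x)\, z$, where $f \in S_{\mathbb{X}^*}$ is a functional witnessing $x_1 \perp_B y$ in the sense of James (i.e. $f(x_1) = \|x_1\| = 1$ — this exists because $\|x_1 + \lambda y\| \geq \|x_1\|$ for all $\lambda$, but I actually want $f$ to norm $x_1$ and annihilate... let me instead take $f$ to be a norming functional for $x_1$), and $z = Ty \in \mathbb{Y}$. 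The point is that $\mathbb{X}$ is strictly convex and reflexive, so every $x_1 \in M_T$ is such that the relevant orthogonality relations are rigid, and I can exploit the connection between $y \perp_B x_1$ and the existence of a suitable functional.

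Here is the cleaner line I would actually pursue. First I would observe that, since $\mathbb{X}$ is strictly convex and reflexive and $T$ is compact, $M_T$ behaves well; in particular, from Theorem \ref{th-Kadets-Klee}(i) applied with any $A$ satisfying $T \perp_B A$, we get points $x, y' \in M_T$ with $Ax \in (Tx)^+$ and $Ay' \in (Ty')^-$. Next, I want to \emph{produce} an $A$ with $T \perp_B A$. The natural candidate is $A = g \otimes z$ for a carefully chosen functional $g$ and vector $z$: if $g$ annihilates $M_T$ (or at least behaves antisymmetrically on it) then $T \perp_B A$ will hold, and by Theorem \ref{th-Kadets-Klee}(i) combined with strict convexity of $\mathbb{Y}$... wait, $\mathbb{Y}$ is only assumed to be a normed linear space here. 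So I must not use strict convexity of $\mathbb{Y}$. Instead, left symmetry of $T$ gives $A \perp_B T$, and then I unwind what $A \perp_B T$ means for a rank-one $A$: $\|A + \mu T\| \geq \|A\|$ for all $\mu$, which constrains $T$ on $\ker g$ — and if $y \in \ker g$ while $y \perp_B x_1$ with $x_1 \in M_T$, this should pin down $Ty$.

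The key steps in order: (1) use $y \perp_B x_1$ and Hahn--Banach to fix a functional $f\in S_{\mathbb{X}^*}$ with $f(x_1)=1$ and $f(y) = 0$ (possible since $x_1 \perp_B y$ is equivalent, by James, to the existence of a norming functional at $x_1$ vanishing on $y$); (2) define the compact operator $A = f \otimes Ty_{\,}$, i.e. $Au = f(u)\,Ty$, noting $\|A\| = \|Ty\|$ since $f$ attains its norm at $x_1$; (3) check $T \perp_B A$ directly by finding $u, v \in M_T$ with $Au \in (Tu)^+$, $Av \in (Tv)^-$ — here $x_1 \in M_T$ and $A x_1 = f(x_1) Ty = Ty$, and I expect $Tx_1 \perp_B Ty$ does \emph{not} generally hold, so I will instead need $\pm x_1$ or a second norm-attaining point; (4) invoke left symmetry of $T$ to conclude $A \perp_B T$, i.e. $\|A + \mu T\| \geq \|A\| = \|Ty\|$ for all $\mu$; (5) evaluate at $y$: $(A + \mu T)y = f(y) Ty + \mu Ty = \mu Ty$, so $\|\mu Ty\| \geq \dots$ — this is automatically consistent and gives nothing, so I must evaluate the operator norm more cleverly, using a norming sequence for $A$ and the rigidity from strict convexity and reflexivity of $\mathbb{X}$ to show $x_1$ (or $\pm x_1$) is essentially the only norming direction, forcing $\|A + \mu T\|$ to be computed along $x_1$, where $(A+\mu T)x_1 = Ty + \mu Tx_1$, and minimizing over $\mu$ yields $\mathrm{dist}(Ty, \mathrm{span}\,Tx_1)$-type information; (6) combine with the left-symmetry-forced reverse orthogonality to squeeze $Ty=0$.

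The main obstacle I anticipate is step (3)/(5): verifying $T \perp_B A$ and then extracting a genuine contradiction from $A \perp_B T$. The difficulty is that $\mathbb{Y}$ is not assumed strictly convex or smooth, so I cannot simply read off uniqueness of orthogonality in $\mathbb{Y}$; instead the argument must lean entirely on strict convexity and reflexivity of $\mathbb{X}$ (via Theorem \ref{th-Kadets-Klee}(i), whose hypotheses are met since strict convexity of $\mathbb{X}$ makes it Kadets--Klee-like in the needed sense, or via the compact-operator version) to control the norm-attaining set $M_T$ tightly enough that the orthogonality equations collapse. Getting the right auxiliary operator $A$ — possibly a more symmetric choice than the naive rank-one one above, e.g. one built from \emph{two} points of $M_T$ to balance the $(\cdot)^+$ and $(\cdot)^-$ conditions — is where the real work lies.
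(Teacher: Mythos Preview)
Your overall strategy --- build a rank-one auxiliary operator $A = f(\cdot)\,Ty$, check $T\perp_B A$, invoke left symmetry to get $A\perp_B T$, and then read off a contradiction --- is exactly the paper's strategy. But you have the functional $f$ pointing the wrong way, and that single reversal is what causes every subsequent step to jam.

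The hypothesis is $y\perp_B x_1$, not $x_1\perp_B y$. By James' characterization this yields $f\in S_{\mathbb{X}^*}$ with $f(y)=1$ and $f(x_1)=0$, \emph{not} $f(x_1)=1$ and $f(y)=0$ as you write in step (1). With the correct choice, your step (3) becomes trivial: $Ax_1=f(x_1)\,Ty=0$, so $Tx_1\perp_B Ax_1$ automatically, and since $x_1\in M_T$ you get $T\perp_B A$ with no effort (you do not need to hunt for a second point of $M_T$ or balance $(\cdot)^+$ and $(\cdot)^-$ conditions). Left symmetry gives $A\perp_B T$. Now the point you never reach: because $\mathbb{X}$ is strictly convex and $f$ norms $y$, one has $M_A=\{y,-y\}$. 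Applying \cite[Th.~2.1]{PSG} (the compact-operator orthogonality result for reflexive $\mathbb{X}$ with $M_A$ a pair of antipodal points --- this is the ``compact-operator version'' you gesture at; Theorem~\ref{th-Kadets-Klee}(i) is not available since $\mathbb{X}$ is not assumed Kadets--Klee) gives $Ay\perp_B Ty$. But $Ay=f(y)\,Ty=Ty$, so $Ty\perp_B Ty$ and hence $Ty=0$.

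Your step (5) illustrates the dead end your choice creates: with $f(y)=0$ you compute $(A+\mu T)y=\mu Ty$ and correctly note this gives nothing; with $f(y)=1$ the norm-attainment set of $A$ sits at $\pm y$, and the orthogonality $A\perp_B T$ transfers cleanly to $Ay\perp_B Ty$. No analysis of $M_T$ beyond the single point $x_1$, no use of any structure on $\mathbb{Y}$, is needed.
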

\begin{proof}
	Assume, contrary to
	our claim, that $Ty\neq 0$. Since $y\perp_B x_1$, there is $f\in \mathbb{X}^*$ such
	that $\|f\|=1$, $f(y)=1$, $f(x_1)=0$.
	Define $A\in K(\mathbb{X},\mathbb{Y})$ by $Ax:=f(x)Ty$.
	Since $Tx_1\perp_B Ax_1$ and $x_1\in M_T$, we have $T\perp_B A$.
	It follows that $A\perp_B T$. Since $\mathbb{X}$ is strictly convex, $M_A=\{y,-y\}$.
	Thus, from \cite[Th. 2.1]{PSG} it follows that $Ay\perp_B Ty$, so
	$Ty\perp_B Ty$.
	Therefore $Ty=0$, a contradiction. This completes the proof of the theorem.
\end{proof}

Similarly, using Theorem \ref{th-Kadets-Klee}, we can easily prove the following theorem.
\begin{theorem}\label{th-rank1}
	Let $\mathbb{X}$ be a Banach space which is reflexive, Kadets-Klee, strictly convex and $\mathbb{Y}$ be a Banach space. Let $T \in B(\mathbb{X}, \mathbb{Y})$ be left symmetric  with $M_T \neq \emptyset.$ Then for
	$x\in M_T$ if $y\perp_B x$, then $Ty=0$.
\end{theorem}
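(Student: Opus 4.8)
The plan is to mimic the proof of Theorem \ref{th-1111}, replacing the use of \cite[Th. 2.1]{PSG} by part (ii) of Theorem \ref{th-Kadets-Klee}, since now $T$ is only bounded (not necessarily compact) but $\mathbb{X}$ is additionally assumed Kadets-Klee. Suppose, for contradiction, that $Ty \neq 0$ for some $y \in S_\mathbb{X}$ with $y \perp_B x$ and $x \in M_T$. We may assume $\|T\| = 1$. Since $y \perp_B x$, the Hahn-Banach theorem furnishes $f \in S_{\mathbb{X}^*}$ with $f(y) = 1$ and $f(x) = 0$. Define the rank-one operator $A \in B(\mathbb{X}, \mathbb{Y})$ by $Ax' := f(x')\,Ty$.

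Next I would verify $T \perp_B A$: since $f(x) = 0$ we have $Ax = 0$, hence trivially $Tx \perp_B Ax$, and because $x \in M_T$ this gives $T \perp_B A$ (via, e.g., the fact that $\|T + \lambda A\| \geq \|(T+\lambda A)x\| = \|Tx\| = \|T\|$). Left symmetry of $T$ then yields $A \perp_B T$. Now I apply Theorem \ref{th-Kadets-Klee}(ii) to the operator $A$: since $\mathbb{X}$ is strictly convex, $A$ attains its norm only on $\{y, -y\}$, so $M_A = D \cup (-D)$ with $D = \{y\}$, a compact connected subset of $S_\mathbb{X}$, and $A \in B(\mathbb{X},\mathbb{Y})$ with $M_A \neq \emptyset$. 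Note that although Theorem \ref{th-Kadets-Klee} as stated concerns $T \in K(\mathbb{X},\mathbb{Y})$, the operator $A$ here is finite-rank, hence compact, so part (ii) applies to $A \perp_B T$, producing $y' \in M_A$ with $Ay' \perp_B Ty'$. Since $M_A = \{y, -y\}$ and orthogonality is homogeneous, this means $Ay \perp_B Ty$, i.e. $Ty \perp_B Ty$, forcing $Ty = 0$, the desired contradiction.

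The one point that needs care is whether Theorem \ref{th-Kadets-Klee}(ii) is being applied in the correct direction: there the hypothesis is $T \bot_B A$ with $T$ compact and $M_T = D \cup (-D)$. In our situation the roles are swapped — we have $A \perp_B T$ with $A$ compact (finite rank) — so we invoke the theorem with $A$ playing the role of "$T$" and $T$ playing the role of "$A$". This is legitimate because $A$ is compact, $\mathbb{X}$ is reflexive and Kadets-Klee, and $M_A = \{y\} \cup \{-y\}$ has the required structure; the conclusion then reads "there exists $z \in M_A$ with $Az \perp_B Tz$", which is exactly what we use. The rest is routine and parallels Theorem \ref{th-1111} verbatim, so no further obstacle is expected.
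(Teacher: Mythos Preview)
Your proposal is correct and follows exactly the approach the paper intends: the paper states only that the result follows ``similarly, using Theorem~\ref{th-Kadets-Klee}'', i.e., by repeating the argument of Theorem~\ref{th-1111} with \cite[Th.~2.1]{PSG} replaced by Theorem~\ref{th-Kadets-Klee}(ii), and that is precisely what you do. Your care in noting that $A$ (being rank one) is compact, so that Theorem~\ref{th-Kadets-Klee} applies with the roles of the two operators swapped, is the only subtle point and you handle it correctly.
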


Next, we prove the lemma, which is needed in our main result.
\begin{lemma}\label{lemma:minus}
	Let $\mathbb{X}$ be a strictly convex normed linear space and $ u, v \in \mathbb{X}.$ Let $ v \bot_B u$ and $ w = (1-t)u + tv,$ where $ t \in (0,1).$ Then $ au \not\in (av+bw)^{-},$ if $ab >0.$
\end{lemma}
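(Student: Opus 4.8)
The plan is to produce an explicit witness $\lambda_0\le 0$ for which $\|(av+bw)+\lambda_0(au)\|<\|av+bw\|$, since this is exactly what $au\notin(av+bw)^{-}$ means. First I would rewrite the vector in the span of $u,v$: using $w=(1-t)u+tv$,
\[
av+bw=b(1-t)\,u+(a+bt)\,v=:p\,u+q\,v .
\]
The role of the hypotheses $ab>0$ and $t\in(0,1)$ is to force $p$ and $q$ to be nonzero and to carry the same sign as $a$ (hence as $b$): indeed $1-t>0$ gives $\operatorname{sign}p=\operatorname{sign}b=\operatorname{sign}a$, and $q=a+bt$ is a sum of two numbers of that same sign. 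In particular $p/a>0$, so $\lambda_0:=-p/a$ is $\le 0$, and $(av+bw)+\lambda_0(au)=pu+qv-pu=qv$. Thus the lemma reduces to the single strict inequality $|q|\,\|v\|<\|pu+qv\|$ (where we may assume $u\neq 0$).

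The weak inequality $|q|\,\|v\|\le\|pu+qv\|$ is immediate from $v\perp_B u$: write $pu+qv=q\bigl(v+\tfrac pq u\bigr)$ and use $\|v+\mu u\|\ge\|v\|$. The point where strict convexity enters is in upgrading this to a strict inequality, and for that I would invoke the standard fact that in a strictly convex space, if $v\perp_B u$ with $u\ne 0$ and $v\ne 0$, then $\|v+\mu u\|>\|v\|$ for all $\mu\ne 0$. The proof of this fact is short: $\mu\mapsto\|v+\mu u\|$ is convex, is $\ge\|v\|$ everywhere, and equals $\|v\|$ at $\mu=0$; if it also equalled $\|v\|$ at some $\mu_0\ne 0$ it would be constantly $\|v\|$ on the segment joining $0$ and $\mu_0$, so the two \emph{distinct} unit vectors $v/\|v\|$ and $(v+\mu_0u)/\|v\|$ would be joined by a segment lying in $S_\mathbb{X}$, contradicting strict convexity. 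Applying it with $\mu=p/q\ne 0$ yields $\|pu+qv\|=|q|\,\|v+\tfrac pq u\|>|q|\,\|v\|$. (If $v=0$ the inequality is trivial, since then $\|pu+qv\|=|p|\,\|u\|>0$.) Hence $\|(av+bw)+\lambda_0(au)\|=|q|\,\|v\|<\|av+bw\|$ with $\lambda_0\le 0$, so $au\notin(av+bw)^{-}$.

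I do not anticipate a genuine obstacle here: the mathematical content is the elementary strict-convexity fact above, and the only care needed is the sign bookkeeping that makes the chosen $\lambda_0$ nonpositive, which is exactly where $ab>0$ together with $1-t>0$ are used. If one prefers, the same ingredients can be run by contradiction — assuming $au\in(av+bw)^{-}$ and using convexity of $s\mapsto\|su+qv\|$ forces $\|pu+qv\|=|q|\,\|v\|$, whereupon the segment-on-the-sphere argument again contradicts strict convexity — but the explicit-witness version above seems cleanest.
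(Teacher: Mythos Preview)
Your argument is correct and follows essentially the same route as the paper: rewrite $av+bw=(a+bt)v+b(1-t)u$, subtract off the $u$-component, and use strict convexity together with $v\perp_B u$ to get the strict inequality $\|(a+bt)v\|<\|(a+bt)v+b(1-t)u\|$. The only cosmetic difference is that the paper splits into the cases $a,b>0$ and $a,b<0$ and then invokes \cite[Prop.~2.2]{Sa} to pass from $u\notin(av+bw)^{\mp}$ to $au\notin(av+bw)^{-}$, whereas you choose $\lambda_0=-b(1-t)/a$ directly and observe that $ab>0$ forces $\lambda_0<0$, handling both signs at once without the external citation.
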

\begin{proof}
	Let $a>0, b >0.$ Then $ av + bw = av + b(1-t)u + btv = (a+bt)v + b(1-t)u.$ Clearly, $ a+bt >0$ and $ b(1-t) > 0.$ Now, since $\mathbb{X}$ is strictly convex and $ v \bot_B u,$ $\| (a+bt)v\| < \| (a+bt)v + b(1-t)u\|.$ Thus,
	$ \| (av+bw) - b(1-t)u \| = \| (a+bt)v\| < \| (a+bt)v + b(1-t)u\| = \| av+bw\|.$
	This shows that $ u \not\in (av+bw)^{-}.$ Hence, by Proposition \cite[2.2]{Sa}, $au\not\in (av+bw)^{-},$ since  $a>0.$ \\
	Next, consider $a<0,b<0.$ Then as previous, $ \| (av+bw) - b(1-t)u \| = \| (a+bt)v\| < \| (a+bt)v + b(1-t)u\| = \| av+bw\|.$ Here $-b(1-t)>0.$ This shows that $ u \not\in (av+bw)^{+}.$ Again, by Proposition \cite[2.2]{Sa}, $au\not\in (av+bw)^{-},$ since  $a<0.$ This completes the proof of the lemma.\\
\end{proof}

Now, we are in a position to prove our main result.
\begin{theorem}\label{Th-t=0}
	Let $\mathbb{X}$ be a Banach space which is   reflexive, Kadets-Klee, strictly convex and $\mathbb{Y}$ be a strictly convex Banach space. Then $T \in B(\mathbb{X}, \mathbb{Y})$  with $M_T \neq \emptyset$ is left symmetric if and only if $T$ is the zero operator.
\end{theorem}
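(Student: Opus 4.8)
The plan is the following. The ``if'' part is trivial, since $0\perp_B A$ and $A\perp_B 0$ for every $A\in B(\mathbb{X},\mathbb{Y})$. For the ``only if'' part, suppose $T\neq 0$ is left symmetric with $M_T\neq\emptyset$; normalising, take $\|T\|=1$, fix $x_0\in M_T$ and set $p:=Tx_0$, so $\|p\|=1$. I would first reduce to the rank-one situation. By Theorem \ref{th-rank1}, $Ty=0$ whenever $y\perp_B x_0$; and for an arbitrary $z\in\mathbb{X}$, James's theorem \cite[Th.~2.3]{J} supplies $a\in\mathbb{R}$ with $(ax_0+z)\perp_B x_0$, so $T(ax_0+z)=0$ and hence $Tz=-a\,p$. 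Thus $T(\mathbb{X})=\mathbb{R}p$, i.e.\ $Tz=f(z)\,p$ for all $z$, where $f\in\mathbb{X}^{*}$, $\|f\|=1$, $f(x_0)=1$. Since $\mathbb{X}$ is strictly convex, $f$ attains its norm at only one antipodal pair, so $M_T=\{x_0,-x_0\}$; in particular $T$ is compact.

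Next I would introduce auxiliary data. Choose $y_0\in S_{\mathbb{X}}$ with $y_0\perp_B x_0$ (possible as $\dim\mathbb{X}>1$), so that $Ty_0=0$, and choose $g\in S_{\mathbb{X}^{*}}$ with $g(y_0)=1$, $g(x_0)=0$ (James). In $\mathbb{Y}$ pick $v_0\in S_{\mathbb{Y}}$ with $v_0\perp_B p$, and let $u_0\in S_{\mathbb{Y}}\cap\operatorname{span}\{p,v_0\}$ be a direction with $p\perp_B u_0$ (possible since $\dim\mathbb{Y}>1$). I would then construct a compact operator $A\in B(\mathbb{X},\mathbb{Y})$ of rank two with range inside $\operatorname{span}\{p,v_0\}$, of the form $Ax=f(x)\,u_0+g(x)\,\zeta$ with $\zeta\in\operatorname{span}\{p,v_0\}$ suitably chosen and scaled, such that: (a) $Ax_0=u_0$ (automatic from the form); (b) $A$ is norm-attaining; and (c) every $x\in M_A$ has $f(x)\neq 0$ and $Ax$ equal, up to a common sign, to a combination $\alpha p+\beta v_0$ with $\alpha,\beta>0$ of the same sign as $f(x)$, the ratio $\beta/(\alpha+\beta)$ being bounded away from $0$ over $M_A$.

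Granting such an $A$, the rest is short. On one hand $T\perp_B A$: since $x_0\in M_T$ and $Tx_0=p\perp_B u_0=Ax_0$, we get $\|T+\lambda A\|\ge\|p+\lambda u_0\|\ge\|p\|=\|T\|$ for all $\lambda$. On the other hand $A\not\perp_B T$: assume not. As $A\in K(\mathbb{X},\mathbb{Y})$ and $\mathbb{X}$ is reflexive and Kadets-Klee, Theorem \ref{th-Kadets-Klee}$(i)$, with the pair $(A,T)$ in place of $(T,A)$, yields some $y\in M_A$ with $Ty\in(Ay)^{-}$. But take any $x\in M_A$ and write $Ax=\alpha p+\beta v_0$ as in (c); choosing $t\in(0,1)$ with $t<\beta/(\alpha+\beta)$ and putting $w:=(1-t)p+tv_0$, a direct computation gives $Ax=av_0+bw$ with $a,b$ nonzero of the same sign, hence of the same sign as $f(x)$. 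Lemma \ref{lemma:minus}, applied in $\mathbb{Y}$ with $u=p$, $v=v_0$, then gives $ap\notin(Ax)^{-}$, and since $Tx=f(x)p$ is a positive multiple of $ap$ we get $Tx\notin(Ax)^{-}$. As this holds for every $x\in M_A$, it contradicts the existence of the above $y$. Thus $A\not\perp_B T$, which together with $T\perp_B A$ contradicts the left symmetry of $T$. Therefore $T=0$.

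The hard part will be the construction in the second paragraph: one must produce $A$ whose \emph{entire} norm-attainment set stays off $\ker f$ and is carried by $A$ into the open sector positively spanned by $p$ and $v_0$. This is delicate precisely because $\mathbb{X}$ is strictly convex, so $\operatorname{span}\{x_0,y_0\}$ carries no $\ell_\infty$-type structure and one cannot simply build $A$ on this two-dimensional subspace and extend by zero while keeping $M_A$ small; I expect the argument to analyse $A$ first on $\operatorname{span}\{x_0,y_0\}$ (choosing $\zeta$ and the scaling so that $A|_{\operatorname{span}\{x_0,y_0\}}$ attains its norm at a single antipodal pair $\ne\pm x_0$) and then to use reflexivity and strict convexity of $\mathbb{X}$ and $\mathbb{Y}$ to rule out any extra norm-attaining directions. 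Every other ingredient --- the rank-one reduction, the two orthogonality estimates and the use of Lemma \ref{lemma:minus} --- is routine once $A$ is available.
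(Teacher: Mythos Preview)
Your overall plan matches the paper's: reduce to rank one via Theorem~\ref{th-rank1}, then build a compact $A$ with $T\perp_B A$ but $A\not\perp_B T$, and close with Theorem~\ref{th-Kadets-Klee} and Lemma~\ref{lemma:minus}. The rank-one reduction and the final contradiction are handled correctly.

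The gap is precisely the step you yourself flag as the ``hard part'': you never actually construct $A$, and your proposed form $A=f(\cdot)u_0+g(\cdot)\zeta$ with $\zeta$ ``suitably chosen'' is only a template. Producing $A$ and verifying that the \emph{entire} set $M_A$ lands where Lemma~\ref{lemma:minus} can be applied is the whole technical content of the theorem; the paper does not, as you anticipate, argue by ruling out extra norm-attaining directions via abstract strict convexity, but by explicit global norm estimates.

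The paper's construction also differs from your sketch in two respects. First, before building $A$ it proves that $Tx_0$ is itself left symmetric in $\mathbb{Y}$ (this is its Step~2); hence one can pick a single $v\in S_{\mathbb{Y}}$ with \emph{both} $Tx_0\perp_B v$ and $v\perp_B Tx_0$, which is exactly the hypothesis needed in Lemma~\ref{lemma:minus}. Your scheme with two different vectors $v_0$ and $u_0$ sidesteps this, but then the application of Lemma~\ref{lemma:minus} you describe requires an $x$-dependent $w$ and an extra sign-matching argument. Second, the paper does not use a second functional $g$; it uses an algebraic decomposition $\mathbb{X}=\mathbb{R}x_0\oplus\mathbb{R}y\oplus H_y$ with $y\in H\cap S_{\mathbb{X}}$, $x_0\perp_B H$, $H\perp_B x_0$, $y\perp_B H_y$, and sets $A(ax_0+by+h)=av+bw$ with $w=(1-t)Tx_0+tv$. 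The parameters are: $r=\|x_0+y\|\in(1,2)$, then $t$ close enough to $1$ that $(1-t)(1+\|T\|)<(2-r)/(1+2r)$, and $\epsilon$ strictly between these two numbers. One then computes $\|A\|\ge \|A(x_0+y)\|/r>1+2\epsilon$, while for any unit $z=ax_0+by+h$ with $ab<0$ the two-sided orthogonalities give $|a|<1$, $|b|\le 2$, $|b-a|\le 1$, hence $\|Az\|\le |b-a|+|b|\epsilon\le 1+2\epsilon<\|A\|$. This forces $ab>0$ on $M_A$, and Lemma~\ref{lemma:minus} (with $u=Tx_0$, $v=v$, $w=w$) gives $Tz=aTx_0\notin(Az)^-$ for every $z\in M_A$, so $A\not\perp_B T$ by Theorem~\ref{th-Kadets-Klee}.

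In short, your outline is correct, but the decisive idea you are missing is the preliminary left-symmetry of $Tx_0$ together with the concrete choice $w=(1-t)Tx_0+tv$ and the $r,\epsilon,t$ calibration that pins down $M_A$.
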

\begin{proof} The sufficient part is trivial. We prove the necessary part  in three steps by the method of contradiction. Let $T$ be left symmetric but $T$ is non-zero.\\
	\textbf{Step 1.} We show that for each $x \in M_T$ there exists a hyperspace $H$ such that $x \bot_B H$  and $T(H) = 0.$  \\
	Let $ x \in M_T.$ Then from Theorem \ref{th-rank1} it follows that $Ty = 0$ for all $y$ with $ y \bot_B x.$ Since $ x \in M_T$ so by \cite[Lemma 2.1]{SPM2}, there exists a hyperspace $H$  such that $ x \bot_B H $ and $ Tx \bot_B T(H).$ We claim that $T(H) =0.$ Let $ y \in H \cap S_{\mathbb{X}}.$ Then there exists  a hyperspace $H_y$ such that $ y \bot_B H_y.$  Define a linear operator $ A : \mathbb{X} \longrightarrow \mathbb{Y}$ as : $ A(cy+h) = cTy$, where $ c$ is a scalar and $ h \in H_y.$ Clearly $A$ is compact and $M_A= \{ \pm y\} $, since $\mathbb{X}$ is strictly convex. Let $ x = by + h .$ Then $ Ax = b Ty.$  Since $ x \bot_B y,$ $ Tx \bot_B Ty $. This shows that $ Tx \bot_B Ax.$ Thus, $ T \bot_B A.$ Since $T$ is left symmetric, we get $ A \bot_B T.$ Hence, by Theorem \ref{th-Kadets-Klee} (ii), $ Ay \bot_B Ty. $ Thus, $ Ty \bot_B Ty.$ This forces $Ty=0.$ Thus, our claim $ T(H) = 0$  is established. \\
	\textbf{Step 2.} We show that $ H \bot_B x$ and $Tx$ is left symmetric. \\
	
	Let $ h \in H$. Then $ Th=0.$ There exists $ d\in \mathbb{R} $ such that $ dx + h \bot_B x.$ So $ T(dx+h) =0 $ implies $ d=0.$ Thus $ h \bot_B x$ and so $ H \bot_B x.$\\
	Next, we prove that $Tx$ is left symmetric. Let $Tx \bot_B u.$ As before, defining $ A : \mathbb{X} \longrightarrow \mathbb{Y}$ by : $ A(ax+h) = au$, where $ a$ is a scalar and $ h \in H$, we see that $M_A = \{ \pm x \}.$ Since $Tx\perp_B Ax$ and $x\in M_T,$ $ T \bot_B A.$ It follows that $A \bot_B T,$ since $T$ is left symmetric. Therefore, by Theorem \ref{th-Kadets-Klee} (ii), $Ax \bot_B Tx$, i.e., $ u \bot_B Tx.$ This proves that $Tx$ is left symmetric. \\
	
	\textbf{Step 3.}  We construct an operator $A$ such that $ T \bot_B A$ but $ A\not\perp_B T.$ \\
	
	Consider a unit vector $y \in H $. Clearly $ x \bot_B y$ and $ y \bot_B x.$  Then $y \bot_B H_y$ where $H_y$ is a subspace of $H$ of codimension one in $H.$ Let $v \in S_{\mathbb{X}}$ such that $Tx \bot_B v.$ Then  since  $Tx$ is left symmetric we get $ v \bot_B Tx.$ Let $ \| x + y \| = r$, by orthogonality and strict convexity $ 1 < r <2.$
	Choose $0<t<1$ such that $ (1-t) (1 + \|T\|) < \frac{2-r}{1+2r}.$ Choose $\epsilon \in (0,1)$ such that $ (1-t) (1 + \|T\|) < \epsilon <\frac{2-r}{1+2r}.$ Clearly, $\frac{2-r}{1+2r}<1,$ since $1<r<2.$ Let $ w = (1-t)Tx + tv.$ Then $ \| w-v\| = (1-t) \|Tx-v\| \leq (1-t) (1 + \|T\|)<\epsilon.$ Now, any element $z\in \mathbb{X}$ can be written as $ z = ax+by +h$ where $a,b $ are scalars and $ h \in H_y.$ Consider a linear operator $ A : \mathbb{X} \longrightarrow \mathbb{X} $ defined as $ Az = av + bw.$ Then clearly, $A$ is compact. Clearly, $ T \bot_B A$ since  $ x \in M_T$ and $Tx \bot_B Ax.$ We next show that $A\not\perp_B T.$   Now,
	\begin{eqnarray*}
		\| A\big(\frac{x+y}{\|x + y\|}\big) \| & = & \frac{ \| v + w \|}{r} \\
		& = & \frac{\| 2v + w - v\|}{r} \\
		& \geq & \frac{2 - \epsilon}{r}\\
		& > & 1 + 2 \epsilon.
	\end{eqnarray*}
	Therefore, $ \|A\| > 1 + 2 \epsilon.$ It is easy to observe that $ x,y \notin M_A.$ Also, for each $ h \in H_y$ we get $ h \notin M_A$.
	Next, we claim that $ z \not\in M_A$ if $ab<0.$ Let $ z = -ax+by+h \in S_{\mathbb{X}}$ where $a>0,b>0, b-a >0.$ Then by using orthogonality we have, $ 1 = \| z\| = \| -ax+by+h\| > \mid a \mid, $ $ 1 = \|z\| = \| by + h - ax \| \geq \mid b \mid - \mid a \mid = b-a. $ Also $ \mid b \mid = \| by\| \leq \| by + h \| = \| z + ax \| \leq 2.$ Now,
	\begin{eqnarray*}
		\|Az\| & = & \| -av +bw\|\\
		& = & \| (b-a)v + b(w-v)\| \\
		& \leq & \mid b-a \mid + \mid b \mid \|w-v\| \\
		& < & \mid b - a \mid + \mid b \mid \epsilon \\
		& = & b-a + b \epsilon \\
		& \leq & 1 + 2 \epsilon \\
		& < & \| A(\frac{x+y}{\|x + y\|})\|.
	\end{eqnarray*}
	Next, we consider $ z = -ax+by+h \in S_{\mathbb{X}}$ where $a>0,b>0, b-a \leq0$.
	Then also $ \|Az\| < \mid b-a \mid + \mid b \mid \epsilon < a < 1 $. This shows that if $ z = -ax+by+h \in S_{\mathbb{X}},$ where $a>0,b>0 $  then $ z \not\in M_A.$ Similarly, considering $ z = ax-by+h \in S_{\mathbb{X}},$ where $a>0,b>0 $ we can show that $ z \notin M_A.$ So if $ z = ax+by+h \in M_A$ then we must have $ab>0.$ Next, our claim is that $Tz \not\in (Az)^{-}$ for all $ z \in M_A$. Let $ z = ax+by+h \in M_A.$ Then $ab>0, $  $Az= av + bw$  and  $Tz=aTx.$ Therefore, using Lemma \ref{lemma:minus}, we get $Tz \not\in (Az)^{-}.$ Now, using Theorem \ref{th-Kadets-Klee}, we get $A \not\perp_B T.$ This shows that $T$ is not left symmetric, a contradiction. This completes the proof of the theorem.
\end{proof}

In the following theorem, we study left symmetric compact operators. We first note that every compact operator on a reflexive Banach space attains its norm and so the norm attainment set is non-empty. Then using similar arguments as in Theorem \ref{Th-t=0} and \cite[Th. 2.1]{SPM} and \cite[Th. 2.1]{PSG} we can prove the following theorem.

\begin{theorem} Let $\mathbb{X}$ be reflexive, strictly convex Banach space and $\mathbb{Y}$ be a strictly convex Banach space. Then $ T \in K(\mathbb{X}, \mathbb{Y}) $ is left symmetric if and only if $T$ is the zero operator.
\end{theorem}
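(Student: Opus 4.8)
The plan is to reduce the compact case to the bounded case already handled in Theorem~\ref{Th-t=0} by observing that the only place where the Kadets--Klee hypothesis was used is in Theorem~\ref{th-Kadets-Klee}, and that this hypothesis becomes superfluous once we restrict attention to compact operators $A$. Indeed, for compact $T,A\in K(\mathbb{X},\mathbb{Y})$ with $\mathbb{X}$ reflexive, the orthogonality characterization $T\perp_B A$ in terms of norming sequences is exactly \cite[Th. 2.1]{SPM} (or \cite[Th. 2.4]{SPM}), and the passage from a norming sequence $\{x_n\}$ to a genuine maximizer $x\in M_T$ now uses only weak compactness of $B_\mathbb{X}$ together with the compactness of $T$: if $x_{n_k}\rightharpoonup x$ then $Tx_{n_k}\to Tx$ and $Ax_{n_k}\to Ax$, which suffices to transfer the estimates $\|Tx_n+\lambda Ax_n\|\geq\sqrt{1-\epsilon_n^2}\|Tx_n\|$ to the limit without ever needing norm convergence $x_{n_k}\to x$. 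So first I would record the compact analogue of Theorem~\ref{th-Kadets-Klee}: for $T\in K(\mathbb{X},\mathbb{Y})$ with $\mathbb{X}$ reflexive, $T\perp_B A$ if and only if there exist $x,y\in M_T$ with $Ax\in(Tx)^+$ and $Ay\in(Ty)^-$, and if $M_T=D\cup(-D)$ with $D$ compact connected then there is $x\in M_T$ with $Tx\perp_B Ax$ (the latter clause follows from \cite[Lemma 2.1]{PSG} exactly as before, since $M_T$ is compact for compact $T$ on a reflexive space).

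With this in hand, the sufficiency is trivial and I would prove necessity by contradiction following verbatim the three-step argument of Theorem~\ref{Th-t=0}, but with Theorem~\ref{th-Kadets-Klee} replaced throughout by its compact counterpart and Theorem~\ref{th-rank1} replaced by Theorem~\ref{th-1111}. Concretely: since $\mathbb{X}$ is reflexive, $T$ attains its norm, so after normalizing $\|T\|=1$ we have $M_T\neq\emptyset$; pick $x\in M_T$. Step~1: for every unit $y\perp_B x$, Theorem~\ref{th-1111} gives $Ty=0$; since $x\in M_T$, \cite[Lemma 2.1]{SPM2} yields a hyperspace $H$ with $x\perp_B H$ and $Tx\perp_B T(H)$, and the rank-one compact operator $A(cy+h)=cTy$ (for $y\in H\cap S_\mathbb{X}$, $h\in H_y$) has $M_A=\{\pm y\}$ by strict convexity, so $T\perp_B A\Rightarrow A\perp_B T\Rightarrow Ay\perp_B Ty$, i.e.\ $Ty\perp_B Ty$, hence $Ty=0$; thus $T(H)=0$. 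Step~2: $H\perp_B x$ and $Tx$ is left symmetric, by the same rank-one-operator argument (now using \cite[Th. 2.1]{PSG} to get $Ax\perp_B Tx$). Step~3: build the compact operator $Az=av+bw$ with $w=(1-t)Tx+tv$ exactly as before, verify $T\perp_B A$ but $A\not\perp_B T$ using Lemma~\ref{lemma:minus} and the compact version of Theorem~\ref{th-Kadets-Klee}, contradicting left symmetry of $T$.

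The only genuinely new point—and hence the main (minor) obstacle—is checking that dropping the Kadets--Klee assumption causes no harm, i.e.\ that every invocation of Theorem~\ref{th-Kadets-Klee} in the proof of Theorem~\ref{Th-t=0} can be replaced by the compact version stated above. This is precisely where compactness of $A$ pays off: in Theorem~\ref{Th-t=0} the auxiliary operator $A$ was already taken to be compact at every stage, so the replacement is legitimate, and no other use of Kadets--Klee occurs (the weak-to-norm convergence step in Theorem~\ref{th-Kadets-Klee}(i) was only needed to handle general bounded $T$, not the compact $T$ we now have, for which $\|Tx_{n_k}\|\to\|T\|$ and $Tx_{n_k}\to Tx$ already force $\|Tx\|=\|T\|$). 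Everything else is a direct transcription, so I would simply write ``the proof is identical to that of Theorem~\ref{Th-t=0}, using \cite[Th. 2.1]{SPM} and \cite[Th. 2.1]{PSG} in place of Theorem~\ref{th-Kadets-Klee} and Theorem~\ref{th-1111} in place of Theorem~\ref{th-rank1},'' after recording the compact orthogonality lemma.
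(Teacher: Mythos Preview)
Your proposal is correct and takes essentially the same approach as the paper. The paper's own proof is even terser than yours: it simply notes that compact operators on reflexive spaces attain their norm, so $M_T\neq\emptyset$, and then states that the argument of Theorem~\ref{Th-t=0} goes through verbatim with \cite[Th.~2.1]{SPM} and \cite[Th.~2.1]{PSG} in place of Theorem~\ref{th-Kadets-Klee}---exactly the substitution you describe and justify.
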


Now, the natural question that arises is that whether there is any non-zero left symmetric operator in any space. To answer this question, we first prove the following theorems.
\begin{theorem}\label{th-1121}
Let $\mathbb{X},\mathbb{Y}$ be normed linear spaces.
Suppose that
$\mathbb{X}$ is strictly convex and reflexive.
Suppose that $T\in K(\mathbb{X},\mathbb{Y})$ is left symmetric with $\|T\|=1$. If
$x_1\in M_T$, then $\dim T(\mathbb{X})=1$.
\end{theorem}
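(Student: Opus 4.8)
The plan is to reduce the statement to the machinery already developed for Theorem \ref{Th-t=0}, combined with Theorem \ref{th-1111}. Fix $x_1 \in M_T$ with $\|T\|=1$. Since $\mathbb{X}$ is reflexive and strictly convex, for each $u \in S_\mathbb{X}$ there is a hyperspace $H_u$ with $u \perp_B H_u$, and rank-one operators of the form $A(cu+h) = c\,\xi$ (for $h \in H_u$, $\xi \in \mathbb{Y}$) are compact with $M_A = \{\pm u\}$. This is exactly the type of test operator used repeatedly above. The goal is to show that the range $T(\mathbb{X})$ is one-dimensional; equivalently, that $Tx$ is a scalar multiple of $Tx_1$ for every $x \in \mathbb{X}$.

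First I would record what Theorem \ref{th-1111} already gives: if $y \in S_\mathbb{X}$ satisfies $y \perp_B x_1$, then $Ty = 0$. Since $x_1 \in M_T$, by \cite[Lemma 2.1]{SPM2} there is a hyperspace $H$ with $x_1 \perp_B H$ and $Tx_1 \perp_B T(H)$; and arguing as in Step 1 of the proof of Theorem \ref{Th-t=0} (taking, for $y \in H \cap S_\mathbb{X}$, the rank-one test operator $A(cy+h)=cTy$ supported off $H_y$, noting $T \perp_B A$ because $x_1 \in M_T$ and $Tx_1 \perp_B Ty$, hence $A \perp_B T$ by left symmetry, hence $Ay \perp_B Ty$ by \cite[Th. 2.1]{PSG}, forcing $Ty \perp_B Ty$ and so $Ty = 0$), one gets $T(H) = 0$. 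Thus $\mathbb{X} = \mathbb{R}x_1 \oplus H$ with $T$ vanishing on the hyperspace $H$, so every $x = ax_1 + h$ has $Tx = a\,Tx_1$. This immediately yields $T(\mathbb{X}) = \mathbb{R}\,Tx_1$, and since $\|T\| = 1$ we have $Tx_1 \neq 0$, so $\dim T(\mathbb{X}) = 1$.

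The main point where care is needed is the passage $A \perp_B T \Rightarrow Ay \perp_B Ty$: this is where one invokes \cite[Th. 2.1]{PSG}, which requires knowing $M_A$ exactly (here $M_A = \{\pm y\}$, using strict convexity of $\mathbb{X}$) and that $A$ is compact on a reflexive space so its norm attainment set is nonempty and the orthogonality characterization applies. One should also make sure the test operator $A$ is genuinely well-defined and bounded — it factors through the quotient $\mathbb{X}/H_y \cong \mathbb{R}$ followed by $c \mapsto cTy$, hence is rank-one and compact. Everything else is bookkeeping: once $T(H)=0$ is in hand, the conclusion $\dim T(\mathbb{X}) = 1$ is immediate, and no further construction (unlike Step 3 of Theorem \ref{Th-t=0}) is needed here, since we are not trying to force $T=0$, only to pin down its rank.
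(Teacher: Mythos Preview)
Your argument is correct, but it is considerably longer than what the paper does. The paper's proof is a three-line contradiction: assume $\dim T(\mathbb{X})>1$, pick $z\in S_\mathbb{X}$ with $Tz,Tx_1$ linearly independent, find a nonzero $y\in\mathrm{span}\{z,x_1\}$ with $y\perp_B x_1$ (possible by \cite[Th.~2.3]{J}); since $y$ is not a scalar multiple of $x_1$ and $Tz,Tx_1$ are independent, $Ty\neq 0$, contradicting Theorem~\ref{th-1111}. Thus the paper never needs the hyperspace $H$ with $x_1\perp_B H$, nor the Step~1 machinery of Theorem~\ref{Th-t=0}; it uses only $y\perp_B x_1\Rightarrow Ty=0$ on a single well-chosen two-dimensional subspace.

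What you do instead is establish the stronger structural fact $T(H)=0$ for the hyperspace with $x_1\perp_B H$, essentially transplanting Step~1 of Theorem~\ref{Th-t=0} (replacing the Kadets--Klee use of Theorem~\ref{th-Kadets-Klee} by \cite[Th.~2.1]{PSG}, which is legitimate since $A$ is compact with $M_A=\{\pm y\}$ on a reflexive space). This works and yields more information, namely the explicit rank-one form $T(ax_1+h)=aTx_1$, but it duplicates effort: that rank-one form is exactly Theorem~\ref{th-1131}, which the paper gets for free as a reformulation once $\dim T(\mathbb{X})=1$ is known. So your route is sound but circuitous; the paper's two-dimensional contradiction from Theorem~\ref{th-1111} is the intended shortcut.
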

\begin{proof}
Fix $x_1\in M_T$.
Assume, contrary to
our claim, that $\dim T(\mathbb{X})>1$. Then there is $z\in S_\mathbb{X}$ such that $\dim\,{\rm span}\{Tz,Tx_1\}=2$.
It follows that there is $y\in {\rm span}\{z,x_1\}$ such that $y\perp_B x_1$, $y\neq 0$, so $Ty\neq 0$,
a contradiction (see Theorem \ref{th-1111}).
\end{proof}
 The following theorem is a reformulation of Theorem \ref{th-1121}.
\begin{theorem}\label{th-1131}
Let $\mathbb{X},\mathbb{Y}$ be normed linear spaces. Suppose that
$\mathbb{X}$ is strictly convex and reflexive. Let $T\in K(\mathbb{X},\mathbb{Y})$ be left
symmetric with $\|T\|=1$.
Then there are $w\in S_\mathbb{Y}$
and $f\in S_{\mathbb{X}^*}$ such that $T(\cdot)=f(\cdot)w$.
\end{theorem}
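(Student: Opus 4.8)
The plan is to read off this statement directly from Theorem~\ref{th-1121}. First I would note that, since $\mathbb{X}$ is reflexive and $T$ is compact, $T$ attains its norm, so $M_T\neq\emptyset$ and we may fix $x_1\in M_T$. With $\|T\|=1$ this puts us exactly in the hypotheses of Theorem~\ref{th-1121}, which yields $\dim T(\mathbb{X})=1$. Hence there is a nonzero $w_0\in\mathbb{Y}$ with $T(\mathbb{X})={\rm span}\{w_0\}$, and for each $x\in\mathbb{X}$ there is a unique scalar $g(x)$ with $Tx=g(x)w_0$. Linearity of $T$ forces $g\colon\mathbb{X}\to\mathbb{R}$ to be linear, and the estimate $|g(x)|\,\|w_0\|=\|Tx\|\leq\|T\|\,\|x\|$ shows $g\in\mathbb{X}^*$.

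The remaining step is normalization. I would set $w:=w_0/\|w_0\|\in S_\mathbb{Y}$ and $f:=\|w_0\|\,g\in\mathbb{X}^*$, so that $T(\cdot)=f(\cdot)w$. Computing the operator norm, $1=\|T\|=\sup_{\|x\|\leq1}\|Tx\|=\sup_{\|x\|\leq1}|f(x)|\,\|w\|=\|f\|$, so in fact $f\in S_{\mathbb{X}^*}$, and the representation $T(\cdot)=f(\cdot)w$ with $w\in S_\mathbb{Y}$, $f\in S_{\mathbb{X}^*}$ is established.

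Since all the substantive work is already done in Theorem~\ref{th-1121} (which itself rests on Theorem~\ref{th-1111}), there is essentially no obstacle here; the statement is, as the text says, a reformulation. The only two points that deserve an explicit word are that $M_T\neq\emptyset$, so that Theorem~\ref{th-1121} is applicable, and that the scaling can be arranged so that $w$ and $f$ are \emph{simultaneously} of norm one, which is precisely what the normalization $\|T\|=1$ buys us.
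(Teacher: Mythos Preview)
Your proposal is correct and matches the paper's intent: the paper presents Theorem~\ref{th-1131} explicitly as a reformulation of Theorem~\ref{th-1121} and gives no separate proof, so your argument---noting $M_T\neq\emptyset$ by reflexivity and compactness, invoking Theorem~\ref{th-1121} to get $\dim T(\mathbb{X})=1$, and then normalizing---is exactly the intended derivation.
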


Observe that Theorem \ref{th-1131} can be strengthened as follows.
\begin{theorem}\label{th-1141}
Let $\mathbb{X},\mathbb{Y}$ be normed linear spaces. Suppose that
$\mathbb{X}$ is strictly convex and reflexive. Let $T\in K(\mathbb{X},\mathbb{Y})$ be left
symmetric with $\|T\|=1$. Assume that $x_1\in M_T$.
Then there are $w\in S_\mathbb{Y}$
and $f\in S_{\mathbb{X}^*}$ such that $T(\cdot)=f(\cdot)w$ (moreover, ${\rm card}M_T=2$ by strictly
convexity of $\mathbb{X}$, so we may assume $M_T=\{x_1,-x_1\}$
for some $x_1\in S_\mathbb{X}$).
Then:

{\rm (a)}\ $x_1$ is right symmetric, $w$ is left symmetric,

{\rm (b)}\  $x_1$ is left symmetric $\Leftrightarrow$ $x_1$ is smooth.
\end{theorem}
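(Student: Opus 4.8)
The plan is to exploit the rank-one representation $T(\cdot)=f(\cdot)w$ supplied by Theorem~\ref{th-1131} and, for each of the three assertions, to manufacture a suitable rank-one compact operator $A$ with $T\perp_B A$, apply left symmetry of $T$ to get $A\perp_B T$, and then read off the desired orthogonality from \cite[Th.~2.1]{PSG}, exactly as in the proof of Theorem~\ref{th-1111}.

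First I would fix the normalization. Since $\|T\|=1=\|w\|$ we get $\|f\|=1$, and since $Tx_1=f(x_1)w$ with $x_1\in M_T$ we get $|f(x_1)|=1$; replacing $(f,w)$ by $(-f,-w)$ if necessary --- which changes neither $T$ nor the symmetry type of $w$ --- I may assume $f(x_1)=1$. Strict convexity of $\mathbb{X}$ then gives that for any $g\in S_{\mathbb{X}^*}$ the set $\{x\in S_\mathbb{X}:|g(x)|=1\}$ is a single antipodal pair (two unit vectors on which $g=1$ average to a unit vector, forcing equality); applied to $g=f$ this yields $M_T=\{x_1,-x_1\}$, the parenthetical claim of the theorem. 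I will also use freely that $a\perp_B b\Leftrightarrow\alpha a\perp_B\beta b$ for nonzero scalars $\alpha,\beta$, so as to pass between $x_1$ and $-x_1$ when invoking \cite[Th.~2.1]{PSG}.

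For $x_1$ right symmetric: given $u\in S_\mathbb{X}$ with $u\perp_B x_1$, choose $g\in S_{\mathbb{X}^*}$ with $g(u)=1$ and $g(x_1)=0$ and set $Ax:=g(x)w$; then $A$ is compact with $M_A=\{u,-u\}$, and $Ax_1=0$ gives $Tx_1\perp_B Ax_1$, hence $T\perp_B A$. Left symmetry yields $A\perp_B T$, and \cite[Th.~2.1]{PSG} gives $Au\perp_B Tu$, i.e.\ $w\perp_B f(u)w$; as $w\neq 0$ this forces $f(u)=0$, whence $\|x_1+\lambda u\|\geq|f(x_1+\lambda u)|=1$ for all $\lambda$, i.e.\ $x_1\perp_B u$. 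For $w$ left symmetric: given $v\in S_\mathbb{Y}$ with $w\perp_B v$, set $Ax:=f(x)v$; then $A$ is compact with $M_A=\{x_1,-x_1\}$ (strict convexity of $\mathbb{X}$ again) and $Ax_1=v$, $Tx_1=w$, so $Tx_1\perp_B Ax_1$ and $T\perp_B A$; left symmetry gives $A\perp_B T$, and \cite[Th.~2.1]{PSG} gives $Ax_1\perp_B Tx_1$, i.e.\ $v\perp_B w$. This settles (a).

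Part (b) is then immediate from Proposition~\ref{prop-symm}: the implication ``$x_1$ left symmetric $\Rightarrow$ $x_1$ smooth'' is part~(iii), using that $\mathbb{X}$ is strictly convex, and the converse combines the right symmetry of $x_1$ just proved with part~(i). The heavy analytic content is entirely absorbed by the cited results, so I do not expect a serious obstacle; the only delicate points are bookkeeping --- arranging $f(x_1)=1$, checking that each auxiliary $A$ has $M_A$ exactly one antipodal pair so that \cite[Th.~2.1]{PSG} applies in the clean two-point form used for Theorem~\ref{th-1111}, and using scaling invariance of Birkhoff--James orthogonality to identify the conclusion $Au\perp_B Tu$ (resp.\ $Ax_1\perp_B Tx_1$) irrespective of which point of $M_A$ the cited theorem returns.
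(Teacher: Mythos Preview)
Your proof is correct and follows essentially the same approach as the paper: the argument for $w$ being left symmetric and all of part~(b) are identical to the paper's proof, and your argument that $x_1$ is right symmetric reaches the same conclusion $f(u)=0$ via the same rank-one operator mechanism, the only cosmetic difference being that the paper simply cites Theorem~\ref{th-1111} to get $Tu=0$ (hence $f(u)=0$) directly, whereas you reprove that step inline with the auxiliary operator $A=g(\cdot)w$.
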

\begin{proof}
First we prove (a). Fix $y\in \mathbb{X}\setminus\{0\}$ such that $y\perp_B x_1$.
By Theorem \ref{th-1111}, we have $Ty=0$. Thus $f(y)=0$. Clearly, $|f(x_1)|=1.$ Therefore, by \cite[Th. 2.1]{J}, $x_1\perp_B y$. Hence, $x_1$ is right symmetric.

Now, fix $z\in \mathbb{Y}\setminus\{0\}$ such that $w\perp_B z$.
Define $A\in K(\mathbb{X},\mathbb{Y})$ by $A(\cdot):=f(\cdot)z$. Clearly, $M_T=M_A.$ It
is easy to check that $Tx_1\perp_B Ax_1$. So $T\perp_B A$.
It follows that $A\perp_B T.$ So, by \cite[Th. 2.1]{PSG}, $Ax_1\perp_B Tx_1$, hence $z\perp_B w$. Thus, $w$ is left symmetric.

Now we prove (b). If $x_1$ is left symmetric then by Proposition \ref{prop-symm} (iii) it follows that $x_1$ is smooth.  On the other hand if $x_1$ is smooth then by Proposition \ref{prop-symm} (i) it follows that $x_1$ is left symmetric.
\end{proof}

As a consequence of Theorem \ref{th-1141} and properties of the
spaces $Y,Y^*$, we will prove the Theorem \ref{th-1151}. The following lemma will be needed.
\begin{lemma}\label{lem-Y-A-y-K-f}
Let $X,Y$ be Banach space. Let $f\in \mathbb{X}^*$, $\|f\|=1$. The
spaces $\mathbb{Y}$ and $K_f(\mathbb{X},\mathbb{Y}):=\{A_y\!\in\! K(\mathbb{X},\mathbb{Y}): A_y(\cdot)\!:=\!f(\cdot)y,\  y\!\in\! \mathbb{Y}\}$
are isometrically
isomorphic.
\end{lemma}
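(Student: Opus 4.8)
The plan is to exhibit an explicit isometric isomorphism $\Phi\colon \mathbb{Y}\to K_f(\mathbb{X},\mathbb{Y})$ given by $\Phi(y):=A_y$, where $A_y(\cdot):=f(\cdot)y$. First I would check that $\Phi$ is well-defined: for each $y\in\mathbb{Y}$ the map $A_y$ is linear and bounded (indeed $A_y=y\otimes f$ is a rank-one, hence compact, operator), so $A_y\in K_f(\mathbb{X},\mathbb{Y})\subseteq K(\mathbb{X},\mathbb{Y})$. Linearity of $\Phi$ in $y$ is immediate from the formula, and surjectivity onto $K_f(\mathbb{X},\mathbb{Y})$ holds by the very definition of that set. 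It remains to verify that $\Phi$ is a bijection and preserves norms.

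The key computation is the norm identity $\|A_y\|=\|f\|\,\|y\|=\|y\|$, using $\|f\|=1$. For the upper bound, $\|A_y x\|=|f(x)|\,\|y\|\le\|f\|\,\|x\|\,\|y\|=\|x\|\,\|y\|$ for all $x\in\mathbb{X}$, so $\|A_y\|\le\|y\|$. For the reverse inequality I would use that $\|f\|=1$ is the supremum of $|f(x)|$ over $x\in B_{\mathbb{X}}$: given $\varepsilon>0$ pick $x_\varepsilon\in S_{\mathbb{X}}$ with $|f(x_\varepsilon)|>1-\varepsilon$; then $\|A_y x_\varepsilon\|=|f(x_\varepsilon)|\,\|y\|>(1-\varepsilon)\|y\|$, whence $\|A_y\|\ge(1-\varepsilon)\|y\|$, and letting $\varepsilon\to0$ gives $\|A_y\|\ge\|y\|$. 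Thus $\|\Phi(y)\|=\|A_y\|=\|y\|$, so $\Phi$ is an isometry; in particular it is injective, and being also surjective it is the desired isometric isomorphism. (If one prefers, when $\mathbb{X}$ is reflexive $f$ attains its norm at some $x_0\in S_{\mathbb{X}}$ and the estimate becomes an equality directly, but the $\varepsilon$-argument works for any Banach space and avoids invoking reflexivity.)

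There is no real obstacle here; the only point requiring a line of care is the lower norm bound, i.e.\ that $\sup_{x\in S_{\mathbb{X}}}|f(x)|=\|f\|=1$, which is just the definition of the functional norm. One should also note explicitly that $K_f(\mathbb{X},\mathbb{Y})$ is a linear subspace of $K(\mathbb{X},\mathbb{Y})$ (closed under sums and scalar multiples since $A_{y_1}+\lambda A_{y_2}=A_{y_1+\lambda y_2}$), so that the statement "$\mathbb{Y}$ and $K_f(\mathbb{X},\mathbb{Y})$ are isometrically isomorphic" is meaningful as an isomorphism of normed spaces. Assembling these observations yields the lemma.
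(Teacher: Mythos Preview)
Your proposal is correct and follows exactly the approach indicated in the paper: the paper defines $\gamma(y):=A_y$ and simply declares ``the rest is clear,'' while you have supplied precisely those omitted details (compactness of the rank-one operator, linearity and surjectivity of the map, and the norm identity $\|A_y\|=\|f\|\,\|y\|=\|y\|$). There is nothing to add.
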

The proof is very easy: define $\gamma \colon \mathbb{Y}\to K_f(\mathbb{X},\mathbb{Y})$ by the formula
$\gamma(y):=A_y$. The rest is clear. It follows from Lemma \ref{lem-Y-A-y-K-f} that
\begin{equation}\label{Az-perpB-Aw-z-perpB-w}
A_z\perp_B A_w\quad\Leftrightarrow\quad z\perp_B w.
\end{equation}
Now we are in position to prove another main result of this section.
\begin{theorem}\label{th-1151}
Let $\mathbb{X},\mathbb{Y}$ be reflexive Banach spaces. Suppose that
$\mathbb{Y}$ is smooth. Let $T\in K(\mathbb{X},\mathbb{Y})$ be left
symmetric with $\|T\|=1$.
Then there are $w\in S_\mathbb{Y}$
and $f\in S_{\mathbb{X}^*}$ such that $T(\cdot)=f(\cdot)w$ and $w$ is left symmetric.
\end{theorem}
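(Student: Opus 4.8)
The plan is to pass to the adjoint operator $T^{*}\colon\mathbb{Y}^{*}\to\mathbb{X}^{*}$ and reduce to Theorem~\ref{th-1131} (and, if one wants the extra information, Theorem~\ref{th-1141}): smoothness of $\mathbb{Y}$ makes $\mathbb{Y}^{*}$ strictly convex, so $\mathbb{Y}^{*}$ can play the role of the strictly convex reflexive \emph{domain} that those theorems demand, even though $\mathbb{X}$ itself need not be strictly convex.

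First I would record the duality. Since $\mathbb{X}$ and $\mathbb{Y}$ are reflexive, $A\mapsto A^{*}$ is a surjective linear isometry of $K(\mathbb{X},\mathbb{Y})$ onto $K(\mathbb{Y}^{*},\mathbb{X}^{*})$ (Schauder's theorem together with $(S^{*})^{*}=S$ under the canonical identifications $\mathbb{X}^{**}=\mathbb{X}$, $\mathbb{Y}^{**}=\mathbb{Y}$), and it preserves Birkhoff--James orthogonality in both directions because $\|A+\lambda B\|=\|A^{*}+\lambda B^{*}\|$ for every scalar $\lambda$. Hence $T$ left symmetric forces $T^{*}$ left symmetric; moreover $\|T^{*}\|=\|T\|=1$, and since $\mathbb{Y}^{*}$ is reflexive, the compact operator $T^{*}$ attains its norm, so $M_{T^{*}}\neq\emptyset$. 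The argument is insensitive to whether ``left symmetric'' is understood relative to $K$ or relative to $B$, the adjoint map being a surjective isometry in both cases.

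Next I would use the standard fact that a reflexive smooth space has strictly convex dual (if $\mathbb{Y}^{*}$ were not strictly convex, the midpoint of two distinct norming functionals would, by reflexivity, attain its norm at some $y\in S_{\mathbb{Y}}$, and then both functionals would norm $y$, contradicting smoothness of $\mathbb{Y}$). Thus Theorem~\ref{th-1131} applies to $T^{*}\in K(\mathbb{Y}^{*},\mathbb{X}^{*})$, with domain $\mathbb{Y}^{*}$ strictly convex and reflexive: there exist $w''\in S_{\mathbb{X}^{*}}$ and $\varphi\in S_{(\mathbb{Y}^{*})^{*}}$ with $T^{*}(\cdot)=\varphi(\cdot)\,w''$. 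By reflexivity $\varphi$ corresponds to a unique $w\in S_{\mathbb{Y}}$ with $\varphi(g)=g(w)$, so $T^{*}g=g(w)\,w''$; applying $(\cdot)^{*}$ once more and using $(T^{*})^{*}=T$ gives $\langle Tx,g\rangle=\langle x,T^{*}g\rangle=g(w)\,w''(x)$ for all $x\in\mathbb{X}$, $g\in\mathbb{Y}^{*}$, i.e. $Tx=w''(x)\,w$. Setting $f:=w''\in S_{\mathbb{X}^{*}}$ we obtain $T(\cdot)=f(\cdot)w$ with $w\in S_{\mathbb{Y}}$, the asserted rank-one form. To finish, $w$ is left symmetric: by Lemma~\ref{lem-Y-A-y-K-f}, $T=A_{w}\in K_{f}(\mathbb{X},\mathbb{Y})$ and $\gamma\colon\mathbb{Y}\to K_{f}(\mathbb{X},\mathbb{Y})$, $\gamma(y)=A_{y}$, is an isometric isomorphism; left symmetry of $T$ in $K(\mathbb{X},\mathbb{Y})$ descends to left symmetry of $T$ inside the subspace $K_{f}(\mathbb{X},\mathbb{Y})$ (orthogonality between two operators only sees the norm on their span), and an isometric isomorphism sends left symmetric points to left symmetric points, so $w=\gamma^{-1}(T)$ is left symmetric. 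Equivalently: if $w\perp_{B}z$ then $T\perp_{B}A_{z}$, hence $A_{z}\perp_{B}T$ by left symmetry of $T$, hence $z\perp_{B}w$ by \eqref{Az-perpB-Aw-z-perpB-w}.

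The genuinely new point, and the only real obstacle, is the transfer of left symmetry to the adjoint: this is exactly where reflexivity of \emph{both} $\mathbb{X}$ and $\mathbb{Y}$ is essential, since it makes the adjoint map onto and $(T^{*})^{*}=T$, and where one must be careful to read ``left symmetric'' consistently. Everything after that is bookkeeping — strict convexity of $\mathbb{Y}^{*}$ to invoke Theorem~\ref{th-1131}, a double adjoint to recover the rank-one form of $T$, and Lemma~\ref{lem-Y-A-y-K-f} (i.e. \eqref{Az-perpB-Aw-z-perpB-w}) to read off that $w$ is left symmetric.
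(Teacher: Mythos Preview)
Your proof is correct and follows essentially the same route as the paper: pass to the adjoint $T^{*}\in K(\mathbb{Y}^{*},\mathbb{X}^{*})$, use smoothness of $\mathbb{Y}$ to get strict convexity of $\mathbb{Y}^{*}$, invoke the rank-one result (the paper cites Theorem~\ref{th-1121}, you cite its reformulation Theorem~\ref{th-1131}), and then use Lemma~\ref{lem-Y-A-y-K-f}/\eqref{Az-perpB-Aw-z-perpB-w} to conclude that $w$ is left symmetric. Your treatment is in fact more careful on one point the paper leaves implicit---you spell out that reflexivity of both spaces makes $A\mapsto A^{*}$ a \emph{surjective} isometry, which is exactly what is needed to transfer left symmetry to $T^{*}$---and your final step (deducing $T\perp_{B}A_{z}$ directly from \eqref{Az-perpB-Aw-z-perpB-w} rather than via $M_{T}$ and Theorem~\ref{th-Kadets-Klee}) is cleaner than the paper's.
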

\begin{proof}
It follows that $Y^*$ is strictly convex and $T^*\in K(\mathbb{Y}^*,\mathbb{X}^*)$ is left symmetric.
By the reflexivity of $Y^*$ and by a compactness of $T^*$ we get $M_{T^*}\neq\emptyset$.
It follows from Theorem \ref{th-1121} that $\dim T^*(\mathbb{Y}^*)=1$ and hence $\dim T(\mathbb{X})=1$
by a reflexivity of $\mathbb{X},\mathbb{Y}$.
Therefore there are $w\in S_\mathbb{Y}$
and $f\in S_{\mathbb{X}^*}$ such that $T(\cdot)=f(\cdot)w$.

We will show that $w$ is left symmetric.
Fix $z\in \mathbb{Y}\setminus\{0\}$ such that $w\perp_B z$.
Define $A_z\in K(\mathbb{X},\mathbb{Y})$ by $A_z(\cdot):=f(\cdot)z$.
Similarly, by the reflexivity of $\mathbb{X}$ and by a compactness of $T$ we get $M_{T}\neq\emptyset$.
Then for some $x_1\in M_T$ we have $Tx_1\perp_B A_zx_1$. So by Theorem \ref{th-Kadets-Klee} we have $T\perp_B A_z$.
It follows that $A_z\perp_B T$. It is easy to note that $T=A_w$.
Applying \eqref{Az-perpB-Aw-z-perpB-w} and Lemma \ref{lem-Y-A-y-K-f} we get $z\perp_B w$.
This means that $w$ is left symmetric.
\end{proof}
Now, we are in a position to exhibit non-zero left symmetric operators. Let $X\oplus_1 Z$ denote the space $X\times Z$ with $\|(x,z)\|_1:=\|x\|+\|z\|$. We characterize left
symmetric linear operators from $X\oplus_1\mathbb{R}$ into a reflexive smooth space $Y$.
\begin{theorem}\label{th-direct-sum-1111}
Let $\mathbb{X}$ be reflexive Banach space. Suppose that $\mathbb{Y}$ is reflexive and smooth Banach space.
Let $T\in K(\mathbb{X}\oplus_1 \mathbb{R},\mathbb{Y})$ be a nonzero operator with $\|T\|=1=\|T(0,1)\|$.
The following conditions are equivalent:

{\rm (a)}\ $T$ is left symmetric,

{\rm (b)}\  there are $w\in S_\mathbb{Y}$, $f\in S_{(\mathbb{X}\oplus_1\mathbb{R})^*}$ such
that $T(\cdot)=f(\cdot)w$, $\mathbb{X}=\ker f$ and $w$ is left symmetric.
\end{theorem}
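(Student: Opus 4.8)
The plan is to establish the two implications separately. The substance of (a)$\Rightarrow$(b), beyond an appeal to Theorem \ref{th-1151}, is to show $\ker f=\mathbb{X}$; the implication (b)$\Rightarrow$(a) is an essentially computational check, which I would carry out after identifying $B(\mathbb{X}\oplus_1\mathbb{R},\mathbb{Y})$ with $B(\mathbb{X},\mathbb{Y})\oplus_\infty\mathbb{Y}$ via $C\mapsto(C|_{\mathbb{X}},C(0,1))$. Throughout I would use the elementary facts about $\ell_1$-sums: that $\mathbb{X}\oplus_1\mathbb{R}$ is reflexive (a finite direct sum of reflexive spaces); that $(\mathbb{X}\oplus_1\mathbb{R})^*=\mathbb{X}^*\oplus_\infty\mathbb{R}$, so every functional on it has the form $(x,s)\mapsto g(x)+cs$ with norm $\max\{\|g\|,|c|\}$; and that $\|C\|=\max\{\|C|_{\mathbb{X}}\|,\|C(0,1)\|\}$ for every $C\in B(\mathbb{X}\oplus_1\mathbb{R},\mathbb{Y})$. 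I would also use that $w$ and $-w$ are simultaneously left symmetric.

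For (a)$\Rightarrow$(b): since $\mathbb{X}\oplus_1\mathbb{R}$ and $\mathbb{Y}$ are reflexive and $\mathbb{Y}$ is smooth, Theorem \ref{th-1151} (applied with $\mathbb{X}\oplus_1\mathbb{R}$ as domain) gives $w\in S_\mathbb{Y}$ and $f\in S_{(\mathbb{X}\oplus_1\mathbb{R})^*}$ with $T(\cdot)=f(\cdot)w$ and $w$ left symmetric; write $f(x,s)=g(x)+cs$. From $1=\|T(0,1)\|=|f(0,1)|\,\|w\|=|c|$ one gets $|c|=1$, and after replacing $(f,w)$ by $(-f,-w)$ if necessary one may assume $c=1$, so it only remains to prove $g=0$. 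Assume $g\neq0$ and set $a:=\|g\|\in(0,1]$; consider the rank-one operator $A\in K(\mathbb{X}\oplus_1\mathbb{R},\mathbb{Y})$ given by $A(x,s):=g(x)w$, so $\|A\|=a$. Then $(T+\lambda A)(x,s)=\bigl((1+\lambda)g(x)+s\bigr)w$, hence $\|T+\lambda A\|=\max\{|1+\lambda|a,1\}\ge1=\|T\|$ and $T\perp_B A$; whereas $(A+\lambda T)(x,s)=\bigl((1+\lambda)g(x)+\lambda s\bigr)w$, hence $\|A+\lambda T\|=\max\{|1+\lambda|a,|\lambda|\}$, and for $\lambda\in(-a,0)$ (a non-empty interval on which $1+\lambda\in(0,1)$, using $a\le1$) both entries are $<a$, so $\|A+\lambda T\|<a=\|A\|$ and $A\not\perp_B T$. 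This contradicts the left symmetry of $T$, so $g=0$, i.e.\ $\mathbb{X}=\ker f$, which is (b).

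For (b)$\Rightarrow$(a): here $f(x,s)=cs$ with $|c|=1$, so $T(x,s)=s(cw)$ with $cw\in S_\mathbb{Y}$ left symmetric, and I may assume $T(x,s)=sw$. Take any $A\in K(\mathbb{X}\oplus_1\mathbb{R},\mathbb{Y})$ with $T\perp_B A$ and write $A(x,s)=Bx+sy_0$ with $B:=A|_\mathbb{X}$, $y_0:=A(0,1)$, so $\|A\|=\max\{\|B\|,\|y_0\|\}$. Since $(T+\lambda A)(x,s)=\lambda Bx+s(w+\lambda y_0)$, one has $\|T+\lambda A\|=\max\{|\lambda|\,\|B\|,\|w+\lambda y_0\|\}\ge1$ for all $\lambda$; for $\lambda$ with $|\lambda|\,\|B\|<1$ this forces $\|w+\lambda y_0\|\ge1$, and since a convex function that equals its value $1$ at $0$ and is $\ge1$ near $0$ is $\ge1$ everywhere, we get $\|w+\lambda y_0\|\ge1=\|w\|$ for all $\lambda$, i.e.\ $w\perp_B y_0$. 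Finally $(A+\lambda T)(x,s)=Bx+s(y_0+\lambda w)$ gives $\|A+\lambda T\|=\max\{\|B\|,\|y_0+\lambda w\|\}$; if $\|B\|\ge\|y_0\|$ this is $\ge\|B\|=\|A\|$, while if $\|B\|<\|y_0\|$ then $y_0\neq0$, and left symmetry of $w$ with $w\perp_B y_0$ yields $y_0\perp_B w$, hence $\|y_0+\lambda w\|\ge\|y_0\|=\|A\|$. In either case $A\perp_B T$, so $T$ is left symmetric.

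I expect the main obstacle to be the step $T\perp_B A\Rightarrow w\perp_B y_0$ in the proof of (b)$\Rightarrow$(a): one must exclude the scenario where $\|w+\lambda y_0\|$ dips below $\|w\|=1$ while $\|T+\lambda A\|$ is kept $\ge1$ by the summand $|\lambda|\,\|B\|$, which is precisely where the $\ell_\infty$-sum expression for $\|T+\lambda A\|$ and convexity of the norm are used (equivalently, one could invoke the operator-orthogonality criterion via $M_T=\{(0,1),(0,-1)\}$ and reflexivity of the domain, cf.\ \cite[Th. 2.1]{PSG}). Beyond this, and the observation that Theorem \ref{th-1151} already supplies the rank-one form of $T$ and the left symmetry of $w$, the argument consists of routine norm computations on $\mathbb{X}\oplus_1\mathbb{R}$ and its dual.
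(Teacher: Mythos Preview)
Your proof is correct and takes a genuinely different route from the paper's. Both proofs open (a)$\Rightarrow$(b) with the same appeal to Theorem~\ref{th-1151} to obtain the rank-one form $T(\cdot)=f(\cdot)w$ with $w$ left symmetric. For the remaining step $\mathbb{X}=\ker f$, the paper observes that $\mathbb{X}\perp_B(0,1)$ and invokes Theorem~\ref{th-1111} to conclude $T(\mathbb{X})=\{0\}$; you instead write $f=(g,c)$, normalise to $c=1$, and build the explicit operator $A(x,s)=g(x)w$, computing $\|T+\lambda A\|$ and $\|A+\lambda T\|$ directly via the dual identification $(\mathbb{X}\oplus_1\mathbb{R})^*=\mathbb{X}^*\oplus_\infty\mathbb{R}$ to force $g=0$. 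For (b)$\Rightarrow$(a), the paper argues through the norm-attainment set $M_T$ (using that $M_T=D\cup(-D)$ with $D$ connected) and, when $M_A\not\subset\{\pm(0,1)\}$, through extreme points of $B_{\mathbb{X}\oplus_1\mathbb{R}}$; you instead use the isometric identification $C\mapsto(C|_{\mathbb{X}},C(0,1))$ of $B(\mathbb{X}\oplus_1\mathbb{R},\mathbb{Y})$ with $B(\mathbb{X},\mathbb{Y})\oplus_\infty\mathbb{Y}$ to reduce $T\perp_B A$ to the scalar statement $w\perp_B y_0$ (via a clean convexity argument) and then split on $\|B\|\gtrless\|y_0\|$. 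Your approach is more self-contained and purely computational, needing no structural input beyond Theorem~\ref{th-1151}; the paper's is more geometric but leans on more machinery. A side benefit of your route is that it avoids any question about applying Theorem~\ref{th-1111}, whose stated hypothesis asks for a strictly convex domain, to the space $\mathbb{X}\oplus_1\mathbb{R}$, which is never strictly convex.
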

\begin{proof}
We will prove (a)$\Rightarrow$(b). It follows from Theorem \ref{th-1151} that
there are $w\in S_\mathbb{Y}$, $f\in S_{(\mathbb{X}\oplus_1\mathbb{R})^*}$ such
that $T(\cdot)=f(\cdot)w$, and $w$ is left symmetric.

In order to prove $\mathbb{X}=\ker f$, we note $\mathbb{X}\perp_B (0,1)$.
It follows from Theorem \ref{th-1111} that $T(\mathbb{X})=\{0\}$, therefore $\mathbb{X}\subset \ker f$.
Since ${\rm co}\dim \mathbb{X}=1$, hence $\mathbb{X}=\ker f$.

In order to prove (b)$\Rightarrow$(a), assume that
there are $w\in S_\mathbb{Y}$, $f\in S_{(\mathbb{X}\oplus_1\mathbb{R})^*}$ such
that $T(\cdot)=f(\cdot)w$, $\mathbb{X}=\ker f$ and $w$ is left symmetric. Suppose that $T\perp_B A$ and $A\neq 0$. It is easy to see that
$M_T=D\cup-D$ for some connected closed subset $D\subset S_\mathbb{X}$.
So, there is $x_1\in M_T$ such that $Tx_1\perp_B Ax_1$ and $Tx_1=w$,
whence $w\perp_B Ax_1$. Since $w$ is left symmetric,
we conclude that $Ax_1\perp_B w$, so we get $Ax_1\perp_B Tx_1$.
Let $x_2\in M_A$. If $x_2\in\{x_1,-x_1\}$, then we have $Ax_1\perp_B Tx_1$, so $A\perp_B T$.
But, if $x_2\notin\{x_1,-x_1\}$ (more precisely, $x_2\!\notin\!\{(0,1),-(0,1)\}$) then
there exists an extreme point
$(e,0)\in{\rm Ext} B_{\mathbb{X}\oplus_1 \mathbb{R}}$ such that $\|A(e,0)\|=\|A\|$. Since
$\mathbb{X}=\ker f$, we have $T(e,0)=0$. It yields $A(e,0)\perp_B T(e,0)$, so $A\perp_B T$.
The proof is completed.
\end{proof}

Using theorem \ref{th-direct-sum-1111}, we now characterize the left symmetric operators from $\ell_1^n$ to a reflexive, smooth Banach space. Let $e_1=(1,0,\ldots,0)$, $e_2=(0,1,0\ldots,0)$, $\ldots$, $e_n=(0,\ldots,0,1)$ denote the extreme points of the closed unit ball $B_{l^n_1}$.
\begin{theorem}\label{th-direct-sum-1111-l}
Suppose that $\mathbb{Y}$ is reflexive and smooth Banach space.
Let $T\in K(l^n_1,\mathbb{Y})$ be a nonzero operator with $\|T\|=1$, $n\geq 2$.
The following conditions are equivalent:

{\rm (a)}\ $T$ is left symmetric,

{\rm (b)}\  there are $w\in S_\mathbb{Y}$, $f\in S_{(\ell_1^n)^*}$, $e_k\in\{e_1,\ldots,e_n\}$ such
that $T(\cdot)=f(\cdot)w$, and $w$ is left
symmetric and $|f(e_k)|=1$ and $f(e_j)=0$ for $e_j\in\{e_1,\ldots,e_n\}\setminus\{e_k\}$.
\end{theorem}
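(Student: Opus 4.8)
The plan is to reduce everything to Theorem \ref{th-direct-sum-1111} by splitting off one coordinate of $\ell_1^n$. Two elementary facts handle the bookkeeping. First, every $T\in B(\ell_1^n,\mathbb{Y})$ attains its norm at an extreme point of $B_{\ell_1^n}$: since $B_{\ell_1^n}=\mathrm{conv}\{\pm e_1,\dots,\pm e_n\}$ and $x\mapsto\|Tx\|$ is convex, $\|T\|=\max_{1\le j\le n}\|Te_j\|$, so $\|Te_k\|=\|T\|$ for some $k$. Second, for any fixed $k$ the coordinate permutation interchanging $k$ and $n$ is a surjective linear isometry of $\ell_1^n$ permuting $\{e_1,\dots,e_n\}$; under the identification $\ell_1^n=\ell_1^{n-1}\oplus_1\mathbb{R}$ obtained by separating the last coordinate (so that $\|(x_1,\dots,x_{n-1},x_n)\|_1=\|(x_1,\dots,x_{n-1})\|_{\ell_1^{n-1}}+|x_n|$) we have $e_n=(0,1)$ and $\ell_1^{n-1}=\mathrm{span}\{e_1,\dots,e_{n-1}\}$. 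Since $\ell_1^{n-1}$ is finite-dimensional, hence reflexive, and $\mathbb{Y}$ is reflexive and smooth, Theorem \ref{th-direct-sum-1111} is applicable to $T$ regarded on $\ell_1^{n-1}\oplus_1\mathbb{R}$ as soon as $\|T(0,1)\|=\|T\|=1$.

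To prove (a)$\Rightarrow$(b) I would first choose $k$ with $\|Te_k\|=1=\|T\|$, permute coordinates so that $k=n$, and apply Theorem \ref{th-direct-sum-1111} in the decomposition $\ell_1^n=\ell_1^{n-1}\oplus_1\mathbb{R}$, whose hypotheses now hold since $\|T(0,1)\|=1=\|T\|$. This yields $w\in S_\mathbb{Y}$ and $f\in S_{(\ell_1^n)^*}$ with $T(\cdot)=f(\cdot)w$, $w$ left symmetric, and $\ell_1^{n-1}=\ker f$. Unwinding, $\ell_1^{n-1}=\ker f$ means $f(e_j)=0$ for $j=1,\dots,n-1$, and then $\|f\|=1$ forces $|f(e_n)|=1$; permuting the coordinates back gives the asserted $e_k\in\{e_1,\dots,e_n\}$ with $|f(e_k)|=1$ and $f(e_j)=0$ for $e_j\ne e_k$.

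For (b)$\Rightarrow$(a) I would run the same identification in reverse. From $T(\cdot)=f(\cdot)w$ with $|f(e_k)|=1$ and $f(e_j)=0$ for $j\ne k$ one reads off $\|T\|=\max_j|f(e_j)|\,\|w\|=1$ and $\|Te_k\|=1$. Permuting coordinates so that $k=n$, we have $\ker f=\ell_1^{n-1}$ and $\|T(0,1)\|=1=\|T\|$, so condition (b) of Theorem \ref{th-direct-sum-1111} is satisfied; its implication (b)$\Rightarrow$(a), together with the hypothesis that $w$ is left symmetric, then gives that $T$ is left symmetric.

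I do not anticipate a genuine obstacle: this statement is essentially a repackaging of Theorem \ref{th-direct-sum-1111}, the only real content being the coordinate relabeling plus the observation that an operator out of $\ell_1^n$ always norms at a vertex $\pm e_k$. The point to state carefully is that the factorization $T=f(\cdot)w$ with $\|f\|=\|w\|=1$ is unique up to replacing $(f,w)$ by $(-f,-w)$ (since $w\neq 0$ recovers $f$ from $T$), so the conclusions $\ker f=\ell_1^{n-1}$ and $|f(e_k)|=1$ transport unambiguously across the permutation. The boundary case $n=2$, where the first summand $\ell_1^{n-1}$ is one-dimensional, either falls under Theorem \ref{th-direct-sum-1111} directly or can be checked by hand, and in any case causes no difficulty.
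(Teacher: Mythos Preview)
Your proposal is correct and follows essentially the same route as the paper: pick $e_k\in M_T$, relabel so that $k=n$, write $\ell_1^n=\ell_1^{n-1}\oplus_1\mathbb{R}$, and apply Theorem~\ref{th-direct-sum-1111}. The paper's proof is terser but identical in substance; your added bookkeeping (norm attainment at extreme points, uniqueness of the factorization up to sign, translating $\ker f=\ell_1^{n-1}$ into the condition on the $f(e_j)$) just fills in details the paper leaves implicit.
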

\begin{proof}
Since $\|T\|=1$, there exists $e_k\in \{e_1,\ldots,e_n\}$ such that $e_k\in M_T$. Without any loss of generality
we can assume that $e_k=e_n$, i.e., $k=n$. Then we may write $l^n_1=l^{n-1}_1\oplus_1\mathbb{R}$.
Next we apply Theorem \ref{th-direct-sum-1111} and the proof is completed.
\end{proof}

\section{ Right symmetric operators}

We begin this section with a simple but important observation in the form of following theorem.

\begin{theorem}\label{th-T-smooth-not-right-sym}
Let $\mathbb{X}$ be reflexive, strictly convex Banach space and $\mathbb{Y}$ be a strictly convex Banach space. If $ T \in K(\mathbb{X}, \mathbb{Y}) $ is smooth then  $T$ cannot be right symmetric.
\end{theorem}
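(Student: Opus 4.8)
The plan is to argue by contradiction, exploiting the characterization of left symmetric compact operators obtained in the previous theorems together with the symmetry statements in Proposition~\ref{prop-symm}. Suppose $T\in K(\mathbb{X},\mathbb{Y})$ is smooth and right symmetric; by scaling we may assume $\|T\|=1$. Since $\mathbb{X}$ is reflexive and $T$ is compact, $M_T\neq\emptyset$, so we may fix $x_1\in M_T$; strict convexity of $\mathbb{X}$ forces $M_T=\{x_1,-x_1\}$. The first step is to produce a nonzero operator orthogonal to $T$ in a useful way: since $Tx_1\in S_\mathbb{Y}$, choose (by Hahn--Banach) a functional witnessing orthogonality, or rather choose $z\in\mathbb{Y}\setminus\{0\}$ with $Tx_1\perp_B z$, and let $g\in S_{\mathbb{X}^*}$ be a norming functional for $x_1$. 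Define $A\in K(\mathbb{X},\mathbb{Y})$ by $A(\cdot):=g(\cdot)z$. Then $Ax_1\perp_B Tx_1$ is false in general --- instead one checks $Tx_1\perp_B Ax_1$ since $Ax_1$ is a multiple of $z$, hence $T\perp_B A$ by Theorem~\ref{th-Kadets-Klee}(i) (or \cite[Th. 2.1]{PSG}) using $x_1\in M_T$.

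The crux is to run the implication in the direction dictated by right symmetry: since $T$ is right symmetric, from $A\perp_B T$ we would get $T\perp_B A$, but that is the wrong direction --- what we actually have is $T\perp_B A$, and right symmetry of $T$ then yields nothing. So instead I would first use smoothness of $T$: a smooth operator has, by definition, essentially unique Birkhoff--James orthogonality behaviour on the operator space side, and by Proposition~\ref{prop-symm}(i), \emph{a smooth right symmetric element is left symmetric}. Hence $T$ would be left symmetric. Now invoke the classification of left symmetric compact operators: since $\mathbb{X}$ is reflexive and strictly convex and $\mathbb{Y}$ is strictly convex, the theorem preceding Theorem~\ref{th-1121} (the ``$K(\mathbb{X},\mathbb{Y})$ version of Theorem~\ref{Th-t=0}'') forces $T=0$, contradicting $\|T\|=1$. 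This is the cleanest route and I expect it to work verbatim.

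The main obstacle is the very first reduction: one must be sure that $B(\mathbb{X},\mathbb{Y})$ (equivalently $K(\mathbb{X},\mathbb{Y})$, which inherits the norm) genuinely satisfies the hypotheses of Proposition~\ref{prop-symm}(i) relative to the element $T$ --- that is, that $T$ being smooth \emph{as an element of the normed linear space} $K(\mathbb{X},\mathbb{Y})$ is exactly what ``smooth operator'' means in the statement. Assuming that (it is the standard convention, and is surely what the authors intend), the argument is immediate: smooth $+$ right symmetric $\Rightarrow$ left symmetric $\Rightarrow$ zero, contradiction. A secondary point worth a sentence is to confirm that $K(\mathbb{X},\mathbb{Y})$ with the operator norm is a normed linear space of dimension greater than $1$ (so that Proposition~\ref{prop-symm} applies), which holds as soon as $T\neq 0$ and $\dim\mathbb{X}>1$, $\dim\mathbb{Y}>1$. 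No delicate estimates are needed; the whole proof is a short chain of already-established implications.
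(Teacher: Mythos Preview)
Your argument is correct and is essentially the paper's own proof: assume $T$ is right symmetric, apply Proposition~\ref{prop-symm}(i) to the smooth element $T\in K(\mathbb{X},\mathbb{Y})$ to conclude that $T$ is left symmetric, then invoke the compact-operator version of Theorem~\ref{Th-t=0} to force $T=0$, contradicting smoothness. The exploratory first paragraph (building the rank-one operator $A$) is unnecessary and can be deleted; the proof is exactly the short chain in your second paragraph.
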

\begin{proof}
If possible, let $T$ be right symmetric. Then by Proposition \ref{prop-symm} it follows that $T$ is left symmetric. Then $T$ must be zero operator, which is not possible as $T$ is smooth. Thus $T$ cannot be right symmetric.
\end{proof}
\begin{remark} This not only improves on \cite[Th. 2.3]{SGP} but also gives an elegant simple proof of the same.
\end{remark}

In the following proposition, we study the properties of $Tx,$ where $T$ is a right symmetric operator and $M_T=\{\pm x\}.$

\begin{prop}\label{prop:rightsymmetric}
Let $T$ be a compact linear operator on a reflexive Banach space and $ M_T = \{ \pm x \}.$ If $T$ is right symmetric then $Tx$ is  right symmetric. Moreover, if $Tx$ is smooth, then $Tx$ is also left symmetric.
\end{prop}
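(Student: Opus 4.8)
The plan is to exploit the rank-one structure of $T$ together with the connection between operator orthogonality and ground-space orthogonality supplied by \cite[Th. 2.1]{PSG} and Theorem \ref{th-Kadets-Klee}. First I would fix a smooth-point-free reformulation: since $M_T=\{\pm x\}$ and $\mathbb{X}$ is reflexive, $x\in M_T$ attains the norm, and by \cite[Lemma 2.1]{SPM2} there is a hyperspace $H$ with $x\perp_B H$ and $Tx\perp_B T(H)$. The first genuine step is to show $Tx$ is right symmetric directly: suppose $u\perp_B Tx$ in $\mathbb{Y}$; I want $Tx\perp_B u$. Following the template used repeatedly in Section 2 (e.g. in Step 2 of Theorem \ref{Th-t=0} and in the proof of Theorem \ref{th-1141}(a)), I would write $\mathbb{X}$ as the (not necessarily topological-direct, but algebraic) span of $x$ and a complementary hyperspace, pick $f\in S_{\mathbb{X}^*}$ norming $x$, and define $A\in K(\mathbb{X},\mathbb{Y})$ by $Ax := f(x)\,u$, i.e. $A(\cdot)=f(\cdot)u$. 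Then $M_A=\{\pm x\}$ by strict convexity of $\mathbb{X}$, so $M_A=M_T$, and since $u\perp_B Tx$ one checks $Ax\perp_B Tx$, hence $A\perp_B T$. Right symmetry of $T$ gives $T\perp_B A$, and now Theorem \ref{th-Kadets-Klee}(ii) (or \cite[Th. 2.1]{PSG}) applied to the pair $T,A$ with $M_T=\{\pm x\}$ connected yields $Tx\perp_B Ax$, i.e. $Tx\perp_B u$. This establishes that $Tx$ is right symmetric.

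For the "moreover" clause, assume in addition that $Tx$ is smooth. Then $Tx$ is a right symmetric \emph{and} smooth element of $\mathbb{Y}$, so Proposition \ref{prop-symm}(i) applies verbatim and gives that $Tx$ is left symmetric. That is the whole argument for the second assertion — it is purely a citation of the earlier proposition once the first part is in hand.

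The main obstacle, and the step I would be most careful about, is the passage "$A\perp_B T \Rightarrow Tx\perp_B Ax$". Theorem \ref{th-Kadets-Klee}(ii) requires $T$ (here in the role of the operator on the \emph{left} of $\perp_B$) to be compact with $M_T=D\cup(-D)$ for $D$ compact connected; with $M_T=\{\pm x\}$ we take $D=\{x\}$, which is fine, so the theorem delivers some $x_0\in M_T$ with $Tx_0\perp_B Ax_0$, and since $M_T=\{\pm x\}$ this is exactly $Tx\perp_B Ax$ up to sign (orthogonality being homogeneous). One must only make sure the hypotheses of Theorem \ref{th-Kadets-Klee} are met — $\mathbb{X}$ reflexive Kadets-Klee is not assumed in this proposition, only reflexive, so I would instead invoke \cite[Th. 2.1]{PSG} directly, which is the compact-operator statement and requires only reflexivity of $\mathbb{X}$; this is the version already used in Theorems \ref{th-1111} and \ref{th-1141}. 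A secondary point to verify cleanly is that $A$ as defined attains its norm only at $\pm x$: this is where strict convexity of $\mathbb{X}$ enters, exactly as in the rank-one constructions earlier in the section, and it is routine. No new idea beyond the recycled rank-one trick is needed.
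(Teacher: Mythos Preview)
Your approach is essentially the paper's: build a rank-one $A$ with $Ax$ (proportional to) the test vector, use $x\in M_A$ and $Ax\perp_B Tx$ to get $A\perp_B T$, invoke right symmetry for $T\perp_B A$, then apply \cite[Th.~2.1]{PSG} with $M_T=\{\pm x\}$ to conclude $Tx\perp_B Ax$; the ``moreover'' clause is exactly Proposition~\ref{prop-symm}(i). The paper phrases this by contradiction and defines $A$ via the hyperspace $H_x$ rather than a norming functional, but these are the same operator.

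One correction: you repeatedly invoke strict convexity of $\mathbb{X}$ to get $M_A=\{\pm x\}$, but this proposition does \emph{not} assume $\mathbb{X}$ is strictly convex (only reflexive), so that step is unavailable---and, more to the point, unnecessary. You only need $x\in M_A$ to deduce $A\perp_B T$, and that follows immediately from $|f(x)|=1=\|f\|$ (or, in the paper's formulation, from $x\perp_B H_x$). The place where the norm-attainment set must be $\{\pm x\}$ is in the passage $T\perp_B A\Rightarrow Tx\perp_B Ax$, and there it is $M_T$, not $M_A$, that matters---and $M_T=\{\pm x\}$ is a hypothesis. Drop the strict-convexity claim and the argument is clean. (Also, your opening line about ``the rank-one structure of $T$'' is a slip: $T$ is not assumed rank-one; it is $A$ that is rank-one.)
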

\begin{proof}
If possible, let there exists $ y \in S_{\mathbb{X}}$ such that $ y \bot_B Tx $ but $ Tx \not\perp_B y.$   Since $x \in M_T,$ so by \cite[Lemma 2.1]{SPM2}, there exists a hyperspace $H_x$ such that $ x \bot_B H_x$ and $Tx \bot_B T(H_x)$.  Consider a linear operator $ A : \mathbb{X} \rightarrow \mathbb{X}$ such that $ A( a x + h) = a y$ where $a$ is  a scalar and $h \in H_x.$ Then it is easy to observe that $A$ is compact. Clearly, $x \in M_A$ and $ Ax \bot_B Tx $ which implies that $ A \bot_B T.$ But  $ M_T = \{ \pm x\} $ and $ Tx \not\perp_B Ax$, so using Theorem 2.1 of \cite{PSG}, we conclude that $ T \not\perp_B A.$ This contradicts the fact that $T$ is right symmetric. Thus, $Tx$ must be right symmetric.\\
If $Tx$ is smooth, then by  Proposition \ref{prop-symm} (i) it follows that $Tx$ is also left symmetric.
\end{proof}

Now, we are ready to prove (under some assumption) that every right symmetric operator must be extreme point.
\begin{theorem}\label{th-dim2-r-sym-exp}
Let $\mathbb{X},\mathbb{Y}$ be striclty convex Banach spaces.
Suppose that $\dim \mathbb{X}=2$. Assume that $Y$ is smooth. Let $T\in K(\mathbb{X},\mathbb{Y})$, $\|T\|=1$.
If $T$ is right symmetric, then $T$ is extreme point
of the closed unit ball of $K(\mathbb{X},\mathbb{Y})$.
\end{theorem}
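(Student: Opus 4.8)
\emph{Proof proposal.} The plan is to argue by contradiction, ultimately clashing with Theorem \ref{th-T-smooth-not-right-sym}. So suppose $T$ is right symmetric with $\|T\|=1$ but is \emph{not} an extreme point of the closed unit ball of $K(\mathbb{X},\mathbb{Y})$ (recall that since $\dim\mathbb{X}=2$ this is all of $B(\mathbb{X},\mathbb{Y})$, $\mathbb{X}$ is reflexive, and every operator is compact). The first task is to extract structural information about $M_T$ from the failure of extremality.

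Write $T=\tfrac12(T_1+T_2)$ with $T_1\neq T_2$ in the closed unit ball; the triangle inequality forces $\|T_1\|=\|T_2\|=1$, and putting $A:=T_1-T\neq 0$ we get $T+\lambda A=\tfrac{1+\lambda}{2}T_1+\tfrac{1-\lambda}{2}T_2$, a convex combination for every $\lambda\in[-1,1]$, so $\|T+\lambda A\|\le 1$ there. Now fix $x\in M_T$: the function $\lambda\mapsto\|Tx+\lambda Ax\|$ is convex, is bounded by $1$ on $[-1,1]$, and equals $1$ at the interior point $\lambda=0$, hence it is identically $1$ on $[-1,1]$; since $\mathbb{Y}$ is strictly convex this is impossible unless $Ax=0$. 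Thus $A$ vanishes on $M_T$. As $A\neq0$ and $\dim\mathbb{X}=2$, the kernel of $A$ is one-dimensional, so $M_T$ — which is nonempty (finite-dimensional domain) and symmetric — must equal $\{x_0,-x_0\}$ for a single $x_0\in S_\mathbb{X}$ spanning $\ker A$.

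Next I would bring in smoothness of $\mathbb{Y}$. Since $\|Tx_0\|=1\neq0$ and $\mathbb{Y}$ is smooth, $Tx_0$ is a smooth point of $\mathbb{Y}$. At this point I would invoke the known description of smooth points of $K(\mathbb{X},\mathbb{Y})$ (valid for compact $T$ with $M_T\neq\emptyset$, e.g. when $\mathbb{X}$ is reflexive): $T$ is a smooth point precisely when $M_T$ is a single antipodal pair $\{x_0,-x_0\}$ and $Tx_0$ is a smooth point of $\mathbb{Y}$. Hence $T$ is a smooth point of $K(\mathbb{X},\mathbb{Y})$. But $\mathbb{X}$ is reflexive and strictly convex and $\mathbb{Y}$ is strictly convex, so Theorem \ref{th-T-smooth-not-right-sym} says a smooth compact operator between such spaces cannot be right symmetric — contradicting our assumption. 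Therefore $T$ is an extreme point. (As an alternative finish one can avoid the smoothness characterization and instead feed $M_T=\{x_0,-x_0\}$ into Proposition \ref{prop:rightsymmetric} to learn that $Tx_0$ is right symmetric and, being smooth, also left symmetric by Proposition \ref{prop-symm}(i); one then has to propagate the left symmetry of $Tx_0$ back up to $T$ along the lines of Theorems \ref{th-1141} and \ref{Th-t=0} and conclude $T=0$, a contradiction.)

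The routine parts here are the convexity estimate and the dimension count in the second paragraph. The delicate point — and the main obstacle — is the third paragraph: making sure that ``$M_T=\{x_0,-x_0\}$ together with $Tx_0$ smooth'' genuinely forces $T$ to be a smooth point of $K(\mathbb{X},\mathbb{Y})$ under exactly the present hypotheses. If one instead follows the self-contained route in the parenthetical remark, the obstacle shifts to the propagation step: deducing left symmetry of $T$ itself from left symmetry of $Tx_0$, right symmetry of $T$, and $M_T=\{x_0,-x_0\}$, which is the step that requires the genuinely new work.
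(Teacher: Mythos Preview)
Your proof is correct and follows essentially the same approach as the paper, only organized as the contrapositive: the paper first argues that right symmetry forces $\operatorname{card}M_T\ge 4$ (via \cite[Th.~4.2]{PSG} and Theorem~\ref{th-T-smooth-not-right-sym}, exactly your ``delicate point'') and then deduces extremality from the strict convexity of $\mathbb{Y}$, whereas you start from non-extremality, use the same strict-convexity argument to force $M_T=\{\pm x_0\}$, and then reach the same smoothness contradiction. The ingredients and the key step are identical.
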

\begin{proof}
Assume that $T\in K(\mathbb{X},\mathbb{Y})$ is right symmetric with $\|T\|=1$.
We show that ${\rm card} M_T\geq 4$.
Assume, contrary to
our claim, that ${\rm card} M_T=2$. Then using \cite[Th. 4.2]{PSG}, we get $T$ is smooth.
It follows from Theorem \ref{th-T-smooth-not-right-sym} that $T$ is not right symmetric, a contradiction.

Thus we have shown that ${\rm card} M_T\geq 4$. Therefore there are $a,b\in M_T$ such
that $a,b$ are linearly independent. Next we are going to prove that $T\in {\rm Ext}B_{K(\mathbb{X},\mathbb{Y})}$.
Assume that $T=\lambda U+(1-\lambda)W$ for some $U,W\in B_{K(\mathbb{X},\mathbb{Y})}$ and for some $\lambda\in(0,1)$.
It follows that $Ta=\lambda Ua+(1-\lambda)Wa$ and $Tb=\lambda Ub+(1-\lambda)Wb$. In particular, we have
$Ua,Wa,Ub,Wb\in B_{\mathbb{Y}}$. Since $\mathbb{Y}$ is strictly convex, we
obtain $Ta=Ua=Wa$ and $Tb=Ub=Wb$.
We have shown that $T,U$ and $W$ coincides on the basis $\{a,b\}$, thus they are equal: $T=U=W$.
That means $T\in {\rm Ext}B_{K(\mathbb{X},\mathbb{Y})}$.
\end{proof}

\noindent When $ \mathbb{X} $ is not necessarily strictly convex or smooth, we have the following two theorems regarding right symmetric operators.
\begin{theorem}
Let $T$ be a non-zero compact operator on a reflexive, Kadets-Klee Banach space $\mathbb{X}$. If $T$ is smooth and  $\|T\|$ is a spectral value of $T,$ then either nullity $T = 0$ or $T$ is not right symmetric.
\end{theorem}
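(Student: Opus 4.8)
The plan is to argue by contradiction: assume all the stated hypotheses, and in addition that $\ker T\neq\{0\}$ and that $T$ is right symmetric; I will derive a contradiction. First I would use the spectral hypothesis to pin down a direction on which $T$ acts like the identity. Since $T$ is compact and $T\neq0$, the nonzero number $\|T\|$, being a spectral value, must be an eigenvalue, so there is $x_0\in S_\mathbb{X}$ with $Tx_0=\|T\|x_0$. Rescaling $T$ (which changes neither smoothness, nor right symmetry, nor the spectral hypothesis, nor the kernel), I may assume $\|T\|=1$, so $x_0\in M_T$ and $Tx_0=x_0$. Since $\mathbb{X}$ is reflexive the compact operator $T$ attains its norm, and since $T$ is a smooth point of $K(\mathbb{X},\mathbb{X})$, the known description of the norm attainment set of a smooth compact operator forces $M_T=\{x_0,-x_0\}$.

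Now I would exploit the kernel. Choose $z\in\ker T$ with $\|z\|=1$. Then for every scalar $\lambda$,
\[
\|I+\lambda T\|\ \geq\ \|(I+\lambda T)z\|\ =\ \|z+\lambda Tz\|\ =\ \|z\|\ =\ 1\ =\ \|I\|,
\]
so $I\perp_B T$. Since $T$ is right symmetric, this forces $T\perp_B I$. As $T$ is compact, $I\in B(\mathbb{X},\mathbb{X})$, and $\mathbb{X}$ is reflexive and Kadets-Klee, Theorem~\ref{th-Kadets-Klee} applies; writing $M_T=\{x_0,-x_0\}=D\cup(-D)$ with $D=\{x_0\}$ (compact and connected), part (ii) gives $x\in M_T$ with $Tx\perp_B Ix$, that is, $Tx\perp_B x$. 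But $x=\pm x_0$ and $Tx_0=x_0$, hence $Tx=x$, so $x\perp_B x$ — absurd, since $\|x-x\|=0<\|x\|$. This contradiction shows that either $\ker T=\{0\}$ or $T$ is not right symmetric.

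I expect the only genuinely non-routine ingredient to be the reduction $M_T=\{x_0,-x_0\}$: without the fact that a smooth compact operator attains its norm at a single antipodal pair, Theorem~\ref{th-Kadets-Klee} only attaches orthogonality information to some uncontrolled point of $M_T$, and the crucial clash $x\perp_B x$ cannot be produced. (Once $M_T=\{\pm x_0\}$ and $Tx_0=x_0$ are secured, part (i) of Theorem~\ref{th-Kadets-Klee} closes the argument just as well: $T\perp_B I$ would need some $y\in M_T$ with $Iy\in(Ty)^-$, i.e.\ $\|Ty+\mu y\|\geq\|Ty\|$ for all $\mu\leq0$, which fails at $\mu=-1$ because $Ty=y$.) Note that both hypotheses on $T$ are used in an essential way — smoothness to get $M_T=\{\pm x_0\}$, and the spectral hypothesis so that $T$ is the identity on $M_T$ — and dropping either one breaks the argument; also, ``right symmetric'' is taken here in $B(\mathbb{X},\mathbb{X})$, so that $I$ is a legitimate test operator.
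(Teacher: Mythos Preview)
Your proof is correct and follows essentially the same route as the paper's: smoothness of $T$ (via \cite[Th.~4.2]{PSG}) forces $M_T=\{\pm x_0\}$, the spectral hypothesis makes $x_0$ an eigenvector for $\|T\|$, a kernel vector gives $I\perp_B T$, right symmetry yields $T\perp_B I$, and Theorem~\ref{th-Kadets-Klee} then produces the absurdity $x_0\perp_B x_0$. The only cosmetic differences are your normalization $\|T\|=1$ and your explicit invocation of part~(ii) of Theorem~\ref{th-Kadets-Klee} via $D=\{x_0\}$; the paper simply cites the theorem without specifying the part.
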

\begin{proof} Since $T$ is compact and smooth on a reflexive Banach space so from \cite[Th. 4.2]{PSG}, we get $ M_T = \{ \pm x \}$ for some $x \in S_{\mathbb{X}}$ and $\|Tx\| = \|T\|. $ Since every non-zero spectral value of a compact operator is an eigenvalue of $T,$ so  we get $Tx =  \|T\|x$ or $ Tx = - \|T\|x.$ If nullity $T = 0,$ then we are done. So assume  nullity $T \geq 1.$  Let  $ u \in S_{\mathbb{X}} $ be such that $ Tu = 0.$ Then for any scalar  $\lambda,~\|I+\lambda T\|\geq \|(I+\lambda T)u\|=\|Iu\|=\|I\|.$ Thus, $I\perp_B T.$ If possible, suppose that $T\perp_B I.$ Then by Theorem \ref{th-Kadets-Klee}, $Tx\perp_B Ix.$ It follows that $\pm \|T\|x\perp_B x,$ a contradiction. Therefore, $T$ is not right symmetric.
\end{proof}

\begin{theorem}
Let $\mathbb{X,\mathbb{Y}}$ be two normed linear spaces and $T\in B(\mathbb{X},\mathbb{Y})$ be such that kernel of $T$ contains a non-zero point. Then either $T \bot_B I $ or $T$ is not right symmetric.
\end{theorem}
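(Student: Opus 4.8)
The plan is to argue by contraposition, reusing the one-line computation that already appears in the proof of the preceding theorem. Since the statement refers to the identity operator $I$, I read it for $T\in B(\mathbb{X},\mathbb{X})$ with $I$ the identity on $\mathbb{X}$ (so effectively $\mathbb{Y}=\mathbb{X}$); the goal is to show that if $T$ is right symmetric then $T\perp_B I$. Assume therefore that $T$ is right symmetric.

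First I would use the hypothesis on the kernel to pick a unit vector $u\in S_{\mathbb{X}}$ with $Tu=0$. The key observation is then that $I\perp_B T$: indeed, for every scalar $\lambda$,
\[
\|I+\lambda T\| \ \geq\ \|(I+\lambda T)u\| \ =\ \|u+\lambda Tu\| \ =\ \|u\| \ =\ 1 \ =\ \|I\| .
\]
This single inequality is really the whole content of the argument.

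Finally I would invoke right symmetry of $T$. By definition, for every $A\in B(\mathbb{X},\mathbb{X})$ the relation $A\perp_B T$ forces $T\perp_B A$; applying this with $A=I$ and the relation $I\perp_B T$ obtained above yields $T\perp_B I$. Passing to the contrapositive, this says exactly that either $T\perp_B I$ or $T$ is not right symmetric, as claimed. I do not anticipate any real obstacle: the proof is just the remark that a unit vector in $\ker T$ makes $I$ Birkhoff--James orthogonal to $T$, together with the definition of right symmetry. The only points to be careful about are that $I$ must be the identity on the domain (so that $T$ and $I$ belong to the same operator space) and that the hypothesis that $\ker T$ contains a nonzero point is used precisely once, to produce the vector $u$.
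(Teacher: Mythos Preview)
Your argument is correct and essentially identical to the paper's: pick $u\in S_{\mathbb{X}}$ with $Tu=0$, deduce $I\perp_B T$ from $\|I+\lambda T\|\geq\|(I+\lambda T)u\|=\|I\|$, and then use right symmetry (equivalently, its failure) to conclude. Your remark that the statement only makes sense for $T\in B(\mathbb{X},\mathbb{X})$ so that $I$ and $T$ live in the same operator space is well taken.
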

\begin{proof} Let $u\in S_{\mathbb{X}}$ be such that $Tu=0.$ Then $ \| I + \lambda T \| \geq \| (I+\lambda T)u \| = \|Iu\| = \|I\|,$ i.e., $ I \bot_B T.$  If $T\bot_B I$ then there is nothing to show, otherwise we get $ I \bot_B T $ but $T \not\perp_B I $ and so $T$ is not right symmetric.

\end{proof}
\begin{remark} We note that if  kernel $T$ contains a non-zero element then clearly $I \bot_B T.$ Then it is easy to see that the last theorem substantially  improves on \cite[Th. 2.5]{SGP}.

\end{remark}
\bibliographystyle{amsplain}

\begin{thebibliography}{99}


\bibitem{B} G. Birkhoff,
  \textit{Orthogonality in linear metric spaces},
 Duke Math. J., \textbf{1} (1935) 169--172.

    \bibitem{GSP} P. Ghosh, D. Sain and K. Paul,    \textit{Orthogonality of bounded linear operators},
    Linear Algebra Appl.,   \textbf{500} (2016) 43--51.


    \bibitem{Ja}  R. C. James,  \textit{Inner product in normed linear spaces}, Bull. Amer. Math. Soc. \textbf{53} (1947 a)  559-566.

\bibitem{J}  R. C. James,  \textit{Orthogonality and linear functionals in normed linear spaces}, Trans. Amer. Math. Soc. \textbf{61} (1947) 265--292.

\bibitem{PSG} K. Paul, D. Sain, and P. Ghosh, \textit{  Birkhoff-James orthogonality and smoothness of bounded linear operators},  Linear Algebra Appl., \textbf{506} (2016) 551--563.

\bibitem{S} D. Sain, \textit{Birkhoff-James orthogonality of linear operators on finite dimensional Banach spaces}, J. Math. Anal. Appl.,
\textbf{447} (2017) 860--866.


\bibitem{Sa} D. Sain, \textit{On the norm attainment set of a bounded linear operator},
J. Math. Anal. Appl.,
\textbf{457} (2018) 67--76.

\bibitem{SPM} D. Sain, K. Paul and A. Mal, \textit{A complete characterization of Birkhoff-James orthogonality in infinite dimensional normed space}, J. Operator Theory, to appear, 2018.

\bibitem{SGP} D. Sain, P. Ghosh and K. Paul, \textit{On symmetry of Birkhoff-James orthogonality of linear operators on finite-dimensional real Banach spaces}, Oper. Matrices, \textbf{11} (2017) 1087--1095.

\bibitem{SPM2} D. Sain, K. Paul and K. Mandal, \textit{ On two extremum problems related to the norm of a bounded linear operator}, arXiv:1806.01051v1 [math.FA].

\bibitem{T} A. Turn\v sek,
\textit{On operators preserving James' orthogonality},
    Linear Algebra Appl.,
    \textbf{407} (2005) 189--195.

 \bibitem{Ta} A. Turn\v sek,
 \textit{A remark on orthogonality and symmetry of operators in $B(H)$},
 Linear Algebra Appl.,
 \textbf{537} (2017) 141--150.


\end{thebibliography}

\end{document}